\documentclass[12pt]{amsart}

\usepackage{amscd,amsmath,amssymb,amsthm}
\usepackage{mathtools}
\usepackage[colorlinks=true]{hyperref}
\mathtoolsset{showonlyrefs}
\usepackage{tikz-cd}
\usepackage{fullpage}

\numberwithin{equation}{section}

\theoremstyle{plain}
\newtheorem{proposition}{Proposition}[section]
\newtheorem{theorem}[proposition]{Theorem}

\theoremstyle{definition}

\newtheorem{remark}[proposition]{Remark}
\newtheorem{example}[proposition]{Example}

%%%%%%%%%%%%%%%%%%%%%%%%%%%%%%%

\title[Gauss--Manin connection for abelian schemes]{A note on the Gauss--Manin connection  for abelian schemes}

\author{Tiago J. Fonseca and Nils Matthes}

\address{IMECC - Unicamp,
	Rua Ségio Buarque de Holanda, 651 Cidade Universitária,
	CEP 13083-859, 
	Campinas, SP, Brazil}
\email{tjfonseca@ime.unicamp.br}

\address{Department of Mathematical Sciences, 
	University of Copenhagen, 
	Universitetsparken 5,
	2100 Copenhagen Ø,
	Denmark}
\email{nils.oliver.matthes@gmail.com}

\begin{document}
	
	\begin{abstract}
		We study differential forms on the universal vector extension $A^\natural$ of an abelian scheme $A$ in characteristic zero, and derive a new construction of the $D$-group scheme structure on $A^\natural$. This gives, in particular, a rather simple description of the Gauss--Manin connection on the de Rham cohomology of $A$ in terms of global algebraic differential forms on $A^\natural$. The key ingredient is the computation of the coherent cohomology of $A^{\natural}$, due to Coleman and Laumon.
		
	\end{abstract}
	
	\maketitle
	
	\section{Introduction}
	
	Let $k$ be a field. It is well known that the de Rham cohomology groups of a smooth morphism $f: X\rightarrow S$ of smooth $k$-schemes are equipped with an integrable $k$-connection, the \emph{Gauss--Manin connection} \cite{Grothendieck:DeRham,KatzOda}. For an abelian scheme $f:A \to S$, Grothendieck explained, in a famous letter to Tate, that the Gauss--Manin connection is related to the `crystalline nature' of the universal vector extension $A^\natural$ of $A$ via the natural isomorphism $\operatorname{Lie}_SA^{\natural} \cong H^1_{\rm dR}(A/S)^{\vee}$ (see \cite{MazurMessing}). When $\operatorname{char}(k)=0$, this `crystalline nature' amounts to a $D$-group scheme structure on $A^\natural/S$, an algebraic analogue of the differential-geometric notion of an integrable Ehresmann connection (see \cite{Buium} and \cite[Section 6]{Bost}).
	
	In the case where $k=\mathbb C$, we may describe this $D$-group scheme structure on $A^{\natural}/S$ in terms of the uniformization $\exp: V^{\rm an}\to A^{\natural,\rm an}$ of the analytification of $A^{\natural}$, where $V$ is the vector group $\mathbb V( (\operatorname{Lie}_SA^{\natural})^{\vee}) \cong \mathbb{V}(H^1_{\rm dR}(A/S))$. Namely, the Gauss--Manin connection on $H^1_{\rm dR}(A/S)$ equips $V$ with a natural structure of a `linear' $D$-group scheme, which descends to an \emph{analytic} $D$-group scheme structure on $A^{\natural,\rm an}$ via $\exp$. One can then show that the latter arises as the analytification of an \emph{algebraic} $D$-group scheme structure on $A^\natural$. From this point of view, the algebraicity of the $D$-group scheme structure is rather surprising, given that GAGA fails for $A^{\natural}$ (cf. \cite[2.3.1]{Bost}).
	
	In this note, we give a direct construction of the $D$-group scheme structure on $A^\natural/S$ in characteristic zero which, to the best of our knowledge, has not appeared in the literature so far. On the one hand, this uses the computation of the coherent cohomology of the structure map $g: A^\natural\rightarrow S$, independently obtained by Coleman \cite[Corollary 2.7]{Coleman} and Laumon \cite[Th\'eor\`eme 2.4.1]{Laumon}. On the other hand, it also requires a detailed study of differential forms, both `relative' and `absolute', on the universal vector extension, which is carried out in Section \ref{ssec:differentialUVE}, and which may be of independent interest. Using these two ingredients, we then construct a $D$-group scheme structure on $A^\natural$ (Theorem \ref{thm:Dgroupscheme}) by means of the canonical retraction $\rho: g_*\Omega^1_{A^\natural/k}\rightarrow \Omega^1_{S/k}$ given by pullback along the zero section $e\in A^\natural(S)$ (Theorem \ref{thm:canonicalsplitting}). This leads to a particularly simple description of the Gauss--Manin connection on $H^1_{\rm dR}(A/S)$ (Proposition \ref{prop:UVEGM}). Finally, we show that the $D$-group scheme structure on $A^\natural$ described above agrees with the one coming from its `crystalline nature' (Theorem \ref{thm:algebraic}).

	\subsection*{Acknowledgements}
	
	We are grateful to Netan Dogra for pointing out reference \cite{Buium}. This project has received funding from the European Research Council (ERC) under the European Union’s Horizon 2020 research and innovation programme (grant agreement No. 724638). The first author is currently supported by the grant $\#$2020/15804-1, São Paulo Research Foundation (FAPESP), and the second author is currently a Walter Benjamin Fellow of the Deutsche Forschungsgemeinschaft (DFG).
	
	\section{Universal vector extensions and de Rham cohomology of abelian schemes} \label{sec:UVEdeRham}
	
	\subsection{Review of de Rham cohomology} \label{ssec:deRham}
	
	Let $f: X\rightarrow S$ be a morphism of schemes. The $q$-th de Rham cohomology sheaf of $f$ is the $\mathcal{O}_S$-module $H^q_{\rm dR}(X/S)\coloneqq R^qf_*(\Omega^\bullet_{X/S})$, where $Rf_*: D^+(f^{-1}\mathcal{O}_S) \to D^+(\mathcal{O}_S)$ is the right derived functor of $f_*$. If $f$ is quasi-compact and quasi-separated, then $H^q_{\rm dR}(X/S)$ is a quasi-coherent $\mathcal{O}_S$-module \cite[\href{https://stacks.math.columbia.edu/tag/0FLX}{Lemma 0FLX}]{Stacks}.
	
	Now let $T$ denote an arbitrary scheme, and let $f: X\rightarrow S$ be a smooth morphism of finite presentation of smooth $T$-schemes. Then $H^q_{\rm dR}(X/S)$ is equipped with an integrable $T$-connection
	\[
	\nabla^q: H^q_{\rm dR}(X/S)\longrightarrow \Omega^1_{S/T}\otimes H^q_{\rm dR}(X/S),
	\]
	the \emph{Gauss--Manin connection} \cite{Grothendieck:DeRham,KatzOda}, which is constructed as follows. Consider the filtered complex $(\Omega^\bullet_{X/T},\{F^p\}_{p\geq 0})$, where
	\[
	F^p\coloneqq \operatorname{im}(f^\ast\Omega^p_{S/T}\otimes \Omega^\bullet_{X/T}[-p]\stackrel{\wedge}{\longrightarrow} \Omega^\bullet_{X/T}).
	\]
	By smoothness, its graded pieces are $F^p/F^{p+1}\cong f^\ast\Omega^p_{S/T}\otimes \Omega^\bullet_{X/S}[-p]$. Hence, the first page of the corresponding spectral sequence gives rise to a morphism
	\[
	d_1^{0,q}: R^qf_*(\Omega^\bullet_{X/S}) \longrightarrow R^{q+1}f_*(f^\ast\Omega^1_{S/T} \otimes \Omega^\bullet_{X/S}[-1])\cong \Omega^1_{S/T}\otimes R^qf_*(\Omega^\bullet_{X/S}),
	\]
	which can be shown to be an integrable $T$-connection. By definition $\nabla^q \coloneqq d_1^{0,q}$ is the Gauss--Manin connection.
	
	\subsection{The universal vector extension of an abelian scheme} \label{ssec:UVE}
	
	Let $f: A\rightarrow S$	be an \emph{abelian scheme}, i.e., $f$ is a proper smooth $S$-group scheme (necessarily commutative), with geometrically connected fibres. Its \emph{universal vector extension} \cite{MazurMessing} is a commutative $S$-group scheme $g: A^\natural\rightarrow S$ which fits into a short exact sequence (of fppf abelian sheaves)
	\begin{equation} \label{eqn:UVE}
		\begin{tikzcd}
			0\arrow{r}&\mathbb V(R^1f_*\mathcal{O}_A)\arrow{r}&A^\natural\arrow{r}{\pi}&A\arrow{r}&0
		\end{tikzcd}
	\end{equation}
	satisfying the following universal property: given a quasi-coherent $\mathcal{O}_S$-module $\mathcal{M}$, the morphism of abelian groups
	\[
	\begin{aligned}
		\operatorname{Hom}_{\mathcal{O}_S}(R^1f_*\mathcal{O}_A,\mathcal{M}) &\longrightarrow \operatorname{Ext}^1_{S_{\rm fppf}}(A,\mathbb V(\mathcal{M}))\\
		\varphi&\longmapsto \mbox{``the class of the pushout of \eqref{eqn:UVE} along $\mathbb V(\varphi)$''}
	\end{aligned}
	\]
	is an isomorphism \cite[Proposition I.1.10]{MazurMessing}.
	
	It follows from \eqref{eqn:UVE} that $\pi: A^\natural\rightarrow A$ is an fppf-torsor under the vector group $\mathbb V(f^\ast R^1f_*\mathcal{O}_A)$, hence an affine bundle, since $R^1f_*\mathcal{O}_A$ is a locally free $\mathcal{O}_S$-module of finite rank. In particular, $g$ is smooth, separated, of finite presentation, and has geometrically connected fibres.\footnote{We warn the reader that $g$ is neither proper nor affine!} Moreover, the formation of the universal vector extension commutes with base change in the following sense: given a morphism of schemes $S'\rightarrow S$, there is a canonical isomorphism of $S'$-group schemes $A^\natural\times_S S' \stackrel{\sim}{\rightarrow} (A\times_S S')^{\natural}$. This follows from the interpretation of $g: A^\natural\rightarrow S$ as a moduli scheme of line bundles with integrable connection \cite[I.2.6, I.3.2, I.4.2]{MazurMessing}.
	
	Now assume that $S$ is locally of finite type over $\operatorname{Spec} \mathbb C$. Then the analytification $A^{\natural,\rm an}$ of $A^\natural$ is uniformized by the vector $S$-group scheme $V\coloneqq \mathbb V(H^1_{\rm dR}(A/S))$. More precisely, there is a short exact sequence of commutative complex Lie groups over $S^{\rm an}$
	\begin{equation} \label{eqn:uniformization}
		\begin{tikzcd}
			0\arrow{r}&L\arrow{r}&V^{\rm an}\arrow{r}{\exp}&A^{\natural,\rm an}\arrow{r}&0,
		\end{tikzcd}
	\end{equation}
	where $L$ is the espace étalé associated to $(R^1f_*^{\rm an}\mathbb Z)^\vee$, and the map $L\to V^{\rm an}$ is induced by the morphism $(R^1f_*^{\rm an}\mathbb Z)^\vee \to (H^1_{\rm dR}(A/S)^{\rm an})^\vee$ sending a locally constant family of topological $1$-cycles $\gamma$ to the integration functional $\alpha \mapsto \int_\gamma\alpha $ (cf. \cite[I.4.4]{MazurMessing}).
	
	\subsection{Coherent cohomology of the universal vector extension} \label{ssec:CoherentUVE}
	
	If $S$ has characteristic zero, then the coherent cohomology of $g$ is particularly simple.
	
	\begin{theorem}[Coleman, Laumon] \label{thm:Laumon}
		If $\operatorname{char}(S)=0$, then the adjunction $\mathcal{O}_S\rightarrow Rg_*\mathcal{O}_{A^\natural}$ is an isomorphism in $D^{\rm b}_{\rm Qcoh}(\mathcal{O}_S)$. In particular, $Rg_*\mathcal{O}_{A^\natural}$ is a perfect object of $D^{\rm b}_{\rm Qcoh}(\mathcal{O}_S)$, and its formation commutes with arbitrary change of base.
	\end{theorem}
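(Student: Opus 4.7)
My plan is to factor $g = f\circ\pi$ and reduce the statement to a Koszul-acyclicity argument. Since $\pi\colon A^\natural \to A$ is a torsor under the vector group $\mathbb V(f^* R^1 f_*\mathcal O_A)$, it is affine, so $R^i\pi_*\mathcal O_{A^\natural} = 0$ for $i > 0$, and $Rg_*\mathcal O_{A^\natural} \cong Rf_*\mathcal A$, where $\mathcal A := \pi_*\mathcal O_{A^\natural}$. Writing $H := R^1f_*\mathcal O_A$, the torsor structure equips $\mathcal A$ with a canonical increasing ``polynomial-degree'' filtration $F_\bullet\mathcal A$ by $\mathcal O_A$-submodules satisfying $\mathrm{gr}^F_n\mathcal A \cong f^*\mathrm{Sym}^n H$.

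Next, the filtration yields a convergent spectral sequence whose $E_1$-page, obtained via the projection formula together with the classical identification $R^q f_*\mathcal O_A \cong \bigwedge^q H$ for abelian schemes, takes the form
\[
E_1^{p,q} \cong \mathrm{Sym}^p H \otimes_{\mathcal O_S} \bigwedge^q H,
\]
converging to $R^\bullet f_*\mathcal A$. The crux is to identify the $d_1$-differentials with those of the Koszul complex $(\mathrm{Sym}^\bullet H \otimes \bigwedge^\bullet H, d_{\mathrm{Kosz}})$, up to an invertible scalar in characteristic zero. This reduces to the extension class of the first non-trivial piece $0 \to \mathcal O_A \to F_1\mathcal A \to f^*H \to 0$, which lives in $\mathrm{Ext}^1_{\mathcal O_A}(f^*H,\mathcal O_A)$ and, via the natural Leray edge map to $H^0(S,\mathrm{End}(H))$, corresponds to $\mathrm{id}_H$ by the universal property of $A^\natural$ (Section \ref{ssec:UVE}). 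The connecting maps for higher $n$ are then determined by multiplicativity, and on $\mathrm{Sym}^n H \otimes \bigwedge^q H$ they acquire a scalar of size $\sim n$ arising from Taylor expansion along the vector-group translation, which is invertible precisely when $\mathrm{char}(S)=0$.

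With the differentials identified, the graded Koszul complex in each positive total degree is exact --- being a degree piece of the Koszul resolution of $\mathcal O_S$ over $\mathrm{Sym}^\bullet H$ --- so $E_2$ collapses to $\mathcal O_S$ at bidegree $(0,0)$ and zero elsewhere, giving the claimed isomorphism $\mathcal O_S \cong Rg_*\mathcal O_{A^\natural}$. Perfectness of $\mathcal O_S$ is obvious, and compatibility with arbitrary base change on $S$ is automatic: every ingredient --- the formation of $A^\natural$, the projection formula, the decomposition $R^q f_*\mathcal O_A \cong \bigwedge^q H$, and the Koszul acyclicity --- commutes with base change.

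The main technical obstacle I foresee is the identification of $d_1$ with the (scaled) Koszul differential: one must carefully track the extension classes through the algebra structure on $\mathcal A$ and verify that the ``Taylor coefficients'' obtained are nonzero exactly in characteristic zero. Once that is established, the remainder of the argument is a formal consequence of Koszul acyclicity.
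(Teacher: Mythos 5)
Your outline is essentially correct, but it is worth saying up front that the paper does not actually prove this theorem: its ``proof'' consists of citing Laumon (Th\'eor\`eme 2.4.1) and Coleman (Corollary 2.7), plus the one-line observation that base-change compatibility follows from the base-change compatibility of $A^\natural$ itself. What you have written is, in effect, a reconstruction of the cited proof (Coleman's, in particular): factor $g=f\circ\pi$ with $\pi$ affine, filter $\mathcal{A}=\pi_*\mathcal{O}_{A^\natural}$ by polynomial degree with $\mathrm{gr}^F_n\mathcal{A}\cong f^*\mathrm{Sym}^nH$, use $R^qf_*\mathcal{O}_A\cong\bigwedge^qH$, identify $d_1$ with a Koszul differential via the extension class of $F_1\mathcal{A}$, and conclude by Koszul exactness. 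Three points deserve more care than your sketch gives them. First, the filtration is exhaustive but unbounded above, so convergence of the spectral sequence requires that $R^qf_*$ commute with the filtered colimit $\mathcal{A}=\operatorname{colim}_nF_n\mathcal{A}$ (true since $f$ is quasi-compact and quasi-separated); alternatively, argue by induction on $n$ with the long exact sequences and pass to the limit. Second, the claim that the class of $0\to\mathcal{O}_A\to F_1\mathcal{A}\to f^*H\to 0$ corresponds to $\mathrm{id}_H$ does not follow verbatim from the universal property as stated in Section \ref{ssec:UVE}, which classifies extensions of fppf group schemes; one must pass from the group extension to the underlying $\mathbb{V}(f^*H)$-torsor and then to the associated extension of $\mathcal{O}_A$-modules (this translation is carried out in Mazur--Messing, Chapter I, and is the form of universality that the Koszul argument actually uses). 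Third, your ``scalar of size $\sim n$'' is precisely the Euler-identity factor coming from the comultiplication $\mathrm{Sym}^nH\to\mathrm{Sym}^{n-1}H\otimes H$; this is exactly where characteristic zero enters, since the complex $\mathrm{Sym}^pH\otimes\bigwedge^qH\to\mathrm{Sym}^{p-1}H\otimes\bigwedge^{q+1}H$ in fixed positive total degree fails to be exact in characteristic $p$ --- consistent with Brion's counterexample cited in the paper. With these points filled in, your argument is a complete and correct proof; its advantage over the paper's treatment is self-containedness, at the cost of reproving a result the authors deliberately use as a black box.
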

	
	\begin{proof}
		In the case where $S$ is locally Noetherian, the first assertion is precisely \cite[Th\'eor\`eme 2.4.1]{Laumon}; the general case may be deduced from the locally Noetherian case by a standard approximation argument. Alternatively, see \cite[Corollary 2.7]{Coleman}.
		
		The second assertion is an immediate consequence of the first, using that the formation of the universal vector extension commutes with base change.
	\end{proof}
	
	\begin{remark}
		In Theorem \ref{thm:Laumon}, the assertion that the canonical map $\mathcal{O}_S\rightarrow g_*\mathcal{O}_{A^\natural}$ is an isomorphism holds more generally when $S$ is flat over $\operatorname{Spec}\mathbb Z$ \cite[Corollary 2.4]{Coleman}.
	\end{remark}
	
	\begin{remark}
		By \cite[Remark 2.4]{Brion}, Theorem \ref{thm:Laumon} is false if the characteristic of $S$ is positive.
	\end{remark}
	
	\subsection{Differential forms on the universal vector extension} \label{ssec:differentialUVE}
	
	We now apply Theorem \ref{thm:Laumon} to the study of sheaves of differential forms on the universal vector extension of an abelian scheme. %The results of this subsection are possibly well known to the experts, but we give complete proofs for lack of a suitable reference.
	
	We begin with relative differential forms. Denote by $e\in A^\natural(S)$ the zero section of $A^\natural$. For every $q\geq 0$, there is a canonical isomorphism
	\begin{equation} \label{eqn:invariant}
		g^\ast e^\ast \Omega^q_{A^\natural/S}\stackrel{\sim}\longrightarrow \Omega^q_{A^\natural/S}
	\end{equation}
	given by extending sections of $e^\ast\Omega^q_{A^\natural/S}$ to invariant differential forms in $\Omega^q_{A^\natural/S}$ via the group law \cite[4.2, Proposition 2]{BLR}. Applying the functor $Rg_*$, we obtain a natural isomorphism
	\[
	Rg_*(g^\ast e^\ast\Omega^q_{A^\natural/S}) \stackrel{\sim}\longrightarrow Rg_*\Omega^q_{A^\natural/S}
	\]
	in $D^{\rm b}_{\rm Qcoh}(\mathcal{O}_S)$.
	
	\begin{proposition} \label{prop:relativeformsUVE}
		If $\operatorname{char}(S)=0$, then the adjunction
		\begin{equation} \label{eqn:adjunction}
			e^\ast\Omega^q_{A^\natural/S} \longrightarrow Rg_*(g^\ast e^\ast\Omega^q_{A^\natural/S}) \cong Rg_*\Omega^q_{A^\natural/S}
		\end{equation}
		is an isomorphism in $D^{\rm b}_{\rm Qcoh}(\mathcal{O}_S)$. In particular:
		\begin{itemize}
			\item[(i)] The $\mathcal{O}_S$-module $g_*\Omega^q_{A^\natural/S}$ is locally free of finite rank, and its formation commutes with arbitrary change of base.
			\item[(ii)]
			The natural map $g^\ast g_*\Omega^q_{A^\natural/S}\rightarrow \Omega^q_{A^\natural/S}$ is an isomorphism of $\mathcal{O}_{A^{\natural}}$-modules.
			\item[(iii)] The natural map $\bigwedge^q g_*\Omega^1_{A^\natural/S} \rightarrow g_*\Omega^q_{A^\natural/S}$ is an isomorphism of $\mathcal{O}_S$-modules.
			\item[(iv)] Every section of $g_*\Omega^q_{A^\natural/S}$ is a closed differential form. In other words, the relative differential $d_{A^\natural/S}$ vanishes identically on $g_*\Omega^q_{A^\natural/S}$.
		\end{itemize}
	\end{proposition}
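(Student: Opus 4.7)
The plan is to derive the main assertion of the proposition from Theorem \ref{thm:Laumon} via the projection formula, and then to read off (i)--(iv) as essentially formal consequences. Since $g:A^\natural\to S$ is smooth of finite presentation, $\Omega^q_{A^\natural/S}$ is a locally free $\mathcal{O}_{A^\natural}$-module of finite rank, so $e^\ast\Omega^q_{A^\natural/S}$ is locally free of finite rank on $S$. The projection formula therefore produces a natural isomorphism
\begin{equation*}
Rg_\ast(g^\ast e^\ast\Omega^q_{A^\natural/S})\;\cong\; e^\ast\Omega^q_{A^\natural/S}\otimes^L_{\mathcal{O}_S} Rg_\ast\mathcal{O}_{A^\natural}
\end{equation*}
in $D^{\rm b}_{\rm Qcoh}(\mathcal{O}_S)$, and substituting $Rg_\ast\mathcal{O}_{A^\natural}\cong\mathcal{O}_S$ from Theorem \ref{thm:Laumon} reduces the right-hand side to $e^\ast\Omega^q_{A^\natural/S}$. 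The main technical point to check is that this chain of natural isomorphisms is inverse to the adjunction map \eqref{eqn:adjunction}; this is a routine diagram chase using the triangular identities for $(g^\ast,g_\ast)$ together with the construction of the projection formula morphism.

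Granting this, (i)--(iii) follow formally. Since the target of \eqref{eqn:adjunction} is concentrated in degree zero, all higher direct images $R^ig_\ast\Omega^q_{A^\natural/S}$ vanish, and $g_\ast\Omega^q_{A^\natural/S}\cong e^\ast\Omega^q_{A^\natural/S}$ is locally free of finite rank; commutation with base change follows from the base-change statement of Theorem \ref{thm:Laumon}, together with the analogous base-change compatibilities of the projection formula and of the invariant extension \eqref{eqn:invariant}. For (ii), apply $g^\ast$ to the isomorphism in (i) and compose with \eqref{eqn:invariant}; naturality of the counit and the triangular identities identify the resulting composite with the counit $g^\ast g_\ast\Omega^q_{A^\natural/S}\to \Omega^q_{A^\natural/S}$, which is therefore an isomorphism. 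For (iii), combine (i) with the identifications $e^\ast\Omega^q_{A^\natural/S}\cong e^\ast\bigwedge^q\Omega^1_{A^\natural/S}\cong \bigwedge^q e^\ast\Omega^1_{A^\natural/S}$, valid because $A^\natural/S$ is smooth and $e^\ast$ commutes with the formation of exterior powers.

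Finally, for (iv), I would exploit that $A^\natural$ is a \emph{commutative} $S$-group scheme. The inversion $[-1]:A^\natural\to A^\natural$ commutes with every translation and so preserves invariant forms; since its derivative at the identity is multiplication by $-1$, the induced action on $e^\ast\Omega^q_{A^\natural/S}$ is by $(-1)^q$. By the identification of sections of $g_\ast\Omega^q_{A^\natural/S}$ with invariant forms furnished by (ii), this means $[-1]^\ast\omega=(-1)^q\omega$ for every section $\omega$. Applying the same relation to $d\omega\in g_\ast\Omega^{q+1}_{A^\natural/S}$ (which again lies in the target by invariance, since translations are $S$-morphisms) and using $d\circ[-1]^\ast=[-1]^\ast\circ d$ gives $(-1)^q d\omega=(-1)^{q+1}d\omega$, whence $2d\omega=0$; in characteristic zero this forces $d\omega=0$. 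The one step where I expect to have to be careful is the compatibility between the projection formula and the adjunction map \eqref{eqn:adjunction}; once this is pinned down, everything else is formal.
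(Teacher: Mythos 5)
Your argument is correct and follows essentially the same route as the paper: the isomorphism \eqref{eqn:adjunction} is deduced from Theorem \ref{thm:Laumon} via the projection formula (the compatibility of the projection-formula isomorphism with the unit of the adjunction that you flag is indeed the only point to check there), and assertions (i)--(iii) are read off exactly as in the paper's proof, via local freeness of $\Omega^q_{A^\natural/S}$, base change for the universal vector extension, the invariant-extension isomorphism \eqref{eqn:invariant}, and compatibility of $e^\ast$ with exterior powers. The one place you genuinely diverge is (iv): the paper simply cites Bourbaki and Coleman for the fact that invariant forms on smooth commutative group schemes are closed, whereas you supply the standard self-contained argument via the inversion $[-1]$, using that $[-1]^\ast$ acts by $(-1)^q$ on invariant $q$-forms, that $d\omega$ is again invariant, and that $d$ commutes with $[-1]^\ast$, to conclude $2\,d\omega=0$ and hence $d\omega=0$ in characteristic zero. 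This is a complete and correct proof of the cited fact (it only needs $2$ to be invertible), and you correctly route it through the identification of sections of $g_*\Omega^q_{A^\natural/S}$ with invariant forms furnished by the main assertion, which is where characteristic zero enters; the payoff of writing it out is a proof with no external dependencies, at the cost of a little extra length.
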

	
	\begin{proof}
		That \eqref{eqn:adjunction} is an isomorphism in $D^{\rm b}_{\rm Qcoh}(\mathcal{O}_S)$ follows immediately from Theorem \ref{thm:Laumon} using the projection formula. The remaining assertions are derived from the isomorphism $g_*\Omega^q_{A^\natural/S}\cong e^\ast\Omega^q_{A^\natural/S}$ as follows. In the first claim of assertion (i), we use that $\Omega^q_{A^\natural/S}$ is locally free of finite rank; in the second, that the universal vector extension commutes with base change. Assertion (ii) is a restatement of \eqref{eqn:invariant}. Similarly, assertion (iii) follows from the fact that $e^\ast$ commutes with the wedge product. Finally, assertion (iv) is a general property of invariant differential forms on commutative smooth group schemes (cf. \cite[Ch. 3, \S 3.14, Proposition 51]{Bourbaki} or \cite[Lemma 2.1]{Coleman}).
	\end{proof}
	
	We next study `absolute' differential forms on $A^\natural$. For this, let $T$ be an arbitrary scheme of characteristic zero, and assume that $S$ is a smooth $T$-scheme. Then there is a short exact sequence
	\begin{equation} \label{eqn:exactsequencedifferentialsUVE}
		\begin{tikzcd}
			0\arrow{r}& g^\ast\Omega^1_{S/T}\arrow{r}&\Omega^1_{A^\natural/T}\arrow{r}&\Omega^1_{A^\natural/S}\arrow{r}&0
		\end{tikzcd}
	\end{equation}
	of locally free $\mathcal{O}_{A^\natural}$-modules of finite rank.
	
	\begin{proposition} \label{prop:absoluteformsUVE}
		With notation and conventions as above, the following are true:
		\begin{itemize}
			\item[(i)] The pushforward of \eqref{eqn:exactsequencedifferentialsUVE} along $g$ gives a short exact sequence of $\mathcal{O}_S$-modules
			\begin{equation} \label{eqn:exactsequencedifferentialspushforward}
				\begin{tikzcd}
					0\arrow{r}& \Omega^1_{S/T}\arrow{r}&g_*\Omega^1_{A^\natural/T}\arrow{r}&g_*\Omega^1_{A^\natural/S}\arrow{r}&0.
				\end{tikzcd}
			\end{equation}
			\item[(ii)] The $\mathcal{O}_S$-module $g_*\Omega^1_{A^\natural/T}$ is locally free of finite rank, and its formation commutes with arbitrary change of base.
			\item[(iii)] The natural map $g^\ast g_*\Omega^1_{A^\natural/T}\rightarrow \Omega^1_{A^\natural/T}$ is an isomorphism of $\mathcal{O}_{A^{\natural}}$-modules.
			\item[(iv)] For every $q\geq 0$, the natural map $\bigwedge^q g_*\Omega^1_{A^\natural/T} \rightarrow g_*\Omega^q_{A^\natural/T}$ is an isomorphism of $\mathcal{O}_S$-modules.
		\end{itemize}
	\end{proposition}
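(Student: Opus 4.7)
The unifying principle behind all four assertions is that, in view of Theorem \ref{thm:Laumon}, the projection formula gives $Rg_* \circ g^\ast \cong \mathrm{id}$ on perfect complexes of $\mathcal{O}_S$-modules. For assertion (i), I would apply $Rg_*$ to the distinguished triangle coming from \eqref{eqn:exactsequencedifferentialsUVE}, obtaining
\[
Rg_*(g^\ast\Omega^1_{S/T}) \longrightarrow Rg_*\Omega^1_{A^\natural/T} \longrightarrow Rg_*\Omega^1_{A^\natural/S} \stackrel{+1}{\longrightarrow}.
\]
The leftmost term is $\Omega^1_{S/T}$ concentrated in degree $0$, by the projection formula and Theorem \ref{thm:Laumon}, and the rightmost term is $g_*\Omega^1_{A^\natural/S}$ concentrated in degree $0$, by Proposition \ref{prop:relativeformsUVE}. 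It follows that $Rg_*\Omega^1_{A^\natural/T}$ is also concentrated in degree $0$, yielding the short exact sequence \eqref{eqn:exactsequencedifferentialspushforward}.

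Assertion (ii) then follows at once: the sequence \eqref{eqn:exactsequencedifferentialspushforward} exhibits $g_*\Omega^1_{A^\natural/T}$ as an extension of two locally free $\mathcal{O}_S$-modules of finite rank ($\Omega^1_{S/T}$ by smoothness of $S/T$, and $g_*\Omega^1_{A^\natural/S}$ by Proposition \ref{prop:relativeformsUVE}(i)), so it is itself locally free of finite rank; and compatibility with base change follows from the same compatibility for the two outer vertices in the distinguished triangle above. For assertion (iii), I would pull back \eqref{eqn:exactsequencedifferentialspushforward} along the flat morphism $g$ and map it into \eqref{eqn:exactsequencedifferentialsUVE}. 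On the left, the map is the identity; on the right, it is the isomorphism of Proposition \ref{prop:relativeformsUVE}(ii); the five lemma then forces the middle arrow $g^\ast g_*\Omega^1_{A^\natural/T} \to \Omega^1_{A^\natural/T}$ to be an isomorphism as well.

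For assertion (iv), I would combine (iii) with the compatibility of wedge powers with pullback to write
\[
\Omega^q_{A^\natural/T} \cong \bigwedge^q g^\ast g_*\Omega^1_{A^\natural/T} \cong g^\ast \bigwedge^q g_*\Omega^1_{A^\natural/T},
\]
and then apply $g_*$, using the projection formula and Theorem \ref{thm:Laumon} a final time, to obtain $g_*\Omega^q_{A^\natural/T} \cong \bigwedge^q g_*\Omega^1_{A^\natural/T}$. No single step is a deep obstacle; the only thing requiring care is checking that the isomorphism produced in (iv) really coincides with the canonical wedge map, which is a routine naturality check against the definitions of the wedge product and the projection formula.
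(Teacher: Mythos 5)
Your argument is correct. For assertions (i)--(iii) it is essentially the paper's proof: (i) is the long exact sequence of $Rg_*$ applied to \eqref{eqn:exactsequencedifferentialsUVE}, with the outer terms computed by Theorem \ref{thm:Laumon} plus the projection formula and by Proposition \ref{prop:relativeformsUVE}; (ii) is read off from the resulting extension of locally free modules; (iii) is the same five-lemma diagram obtained by pulling \eqref{eqn:exactsequencedifferentialspushforward} back along the flat map $g$. For (iv) you take a genuinely different, and in fact more direct, route: you use (iii) to write $\Omega^q_{A^\natural/T}\cong \bigwedge^q g^\ast g_*\Omega^1_{A^\natural/T}\cong g^\ast\bigwedge^q g_*\Omega^1_{A^\natural/T}$ and then apply the projection formula once, whereas the paper localizes on $S$ so that \eqref{eqn:exactsequencedifferentialsUVE} splits and computes $\bigwedge^q$ of the direct sum term by term via Proposition \ref{prop:relativeformsUVE}.(iii). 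Your version avoids the localization and the bookkeeping with wedge powers of a direct sum; the paper's version has the side benefit of producing the local decomposition $g_*\Omega^q_{A^\natural/T}\cong\bigoplus_{i+j=q}\Omega^i_{S/T}\otimes g_*\Omega^j_{A^\natural/S}$, which is reused in the proof of Proposition \ref{prop:filtrationpushforward}. The one point you rightly flag --- that the isomorphism you construct agrees with the canonical map $\bigwedge^q g_*\Omega^1_{A^\natural/T}\to g_*\Omega^q_{A^\natural/T}$ --- does reduce to the standard compatibility of the unit/counit of the adjunction $(g^\ast,g_*)$ with exterior powers, so there is no gap.
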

	
	\begin{proof}
		Assertion (i) is an immediate consequence of Theorem \ref{thm:Laumon} using the projection formula and the long exact sequence in cohomology. Assertion (ii) follows directly from (i) in concert with Proposition \ref{prop:relativeformsUVE}.(i). In order to prove assertion (iii), consider the commutative diagram
		\[
		\begin{tikzcd}
			0\arrow{r}&g^\ast\Omega^1_{S/T}\arrow{r}\arrow[equal]{d}&g^\ast g_*\Omega^1_{A^\natural/T}\arrow{r}\arrow{d}&g^*g_*\Omega^1_{A^\natural/S}\arrow{r}\arrow{d}&0\\
			0\arrow{r}&g^\ast\Omega^1_{S/T}\arrow{r}&\Omega^1_{A^\natural/T}\arrow{r}&\Omega^1_{A^\natural/S}\arrow{r}&0,
		\end{tikzcd}
		\]
		where exactness of the top row follows from (i) and from exactness of $g^\ast$. Both the left-hand arrow and the right-hand arrow are isomorphisms (the latter by Proposition \ref{prop:relativeformsUVE}.(ii)), hence so is the middle one, by the five-lemma. Finally, assertion (iv) is local on $S$, hence, by local freeness, we may assume that the exact sequence \eqref{eqn:exactsequencedifferentialsUVE} splits: $\Omega^1_{A^\natural/T}\cong g^\ast\Omega^1_{S/T}\oplus \Omega^1_{A^\natural/S}$. Thus we get isomorphisms
		\begin{align}
			\bigwedge^q g_*\Omega^1_{A^\natural/T} &\cong \bigwedge^q (\Omega^1_{S/T}\oplus g_*\Omega^1_{A^\natural/S}) & \mbox{(projection formula)}\\
			&\cong \bigoplus_{i+j=q} \bigwedge^i \Omega^1_{S/T}\otimes \bigwedge^j g_*\Omega^1_{A^\natural/S} & \\
			&\cong \bigoplus_{i+j=q} \Omega^i_{S/T}\otimes g_*\Omega^j_{A^\natural/S} & \mbox{(Proposition \ref{prop:relativeformsUVE}.(iii))}\\
			&\cong g_*\left( \bigoplus_{i+j=q} g^\ast\Omega^i_{S/T}\otimes \Omega^j_{A^\natural/S} \right) & \mbox{(projection formula)}\\
			&\cong g_*\left( \bigwedge^q (g^\ast\Omega^1_{S/T}\oplus \Omega^1_{A^\natural/S}) \right)& \\
			&\cong g_*\Omega^q_{A^\natural/T}, &
		\end{align}
		as desired.
	\end{proof}
	
	Now, recall from Section \ref{ssec:deRham} the definition of the filtration $\{F^p\}_{p\geq 0}$ on $\Omega^\bullet_{A^\natural/T}$. It gives rise to a filtration by subcomplexes $\{g_*F^p\}_{p\geq 0}$ on $g_*\Omega^\bullet_{A^\natural/T}$.
	
	\begin{proposition} \label{prop:filtrationpushforward}
		The following assertions hold for all $p\geq 0$:
		\begin{itemize}
			\item[(i)]
			The canonical map $g_*F^p/g_*F^{p+1} \rightarrow g_*\left( F^p/F^{p+1} \right)$ is an isomorphism. In particular, we have
			$
			g_*F^p/g_*F^{p+1} \cong \Omega^p_{S/T}\otimes g_*\Omega^\bullet_{A^\natural/S}[-p].
			$
			\item[(ii)]
			We have an equality $g_*F^p=\operatorname{im}\left(\Omega^p_{S/T}\otimes g_*\Omega^\bullet_{A^\natural/T}[-p] \rightarrow g_*\Omega^\bullet_{A^\natural/T}\right)$ of subcomplexes of $g_*\Omega^\bullet_{A^\natural/T}$.
		\end{itemize}
	\end{proposition}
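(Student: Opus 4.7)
My plan for (i) is to show, by downward induction on $p$, that $R^ig_*F^p\Omega^k_{A^\natural/T}=0$ for all $i>0$ and all $k,p\geq 0$. The base case is trivial, since $F^p\Omega^k_{A^\natural/T}=0$ for $p>k$. For the inductive step, the short exact sequence
\[
0\to F^{p+1}\Omega^k_{A^\natural/T}\to F^p\Omega^k_{A^\natural/T}\to g^*\Omega^p_{S/T}\otimes\Omega^{k-p}_{A^\natural/S}\to 0,
\]
together with the inductive hypothesis applied to the left-hand term and the projection formula combined with Proposition \ref{prop:relativeformsUVE} applied to the right-hand term, yields the desired vanishing for $F^p\Omega^k_{A^\natural/T}$. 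Consequently $g_*$ preserves exactness of this sequence, so $g_*F^p/g_*F^{p+1}$ in degree $k$ is identified with $g_*(g^*\Omega^p_{S/T}\otimes\Omega^{k-p}_{A^\natural/S})\cong\Omega^p_{S/T}\otimes g_*\Omega^{k-p}_{A^\natural/S}$ (projection formula again), which is exactly $g_*(F^p/F^{p+1})$ in degree $k$. This proves (i).

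For (ii), the inclusion $\operatorname{im}\bigl(\Omega^p_{S/T}\otimes g_*\Omega^{\bullet-p}_{A^\natural/T}\to g_*\Omega^\bullet_{A^\natural/T}\bigr)\subseteq g_*F^p$ is automatic: by the projection formula, the source is $g_*(g^*\Omega^p_{S/T}\otimes\Omega^{\bullet-p}_{A^\natural/T})$, and the wedge map $g^*\Omega^p_{S/T}\otimes\Omega^{\bullet-p}_{A^\natural/T}\to\Omega^\bullet_{A^\natural/T}$ has image $F^p$ by definition. For the reverse inclusion I would argue locally on $S$: since $g_*\Omega^1_{A^\natural/S}$ is locally free (Proposition \ref{prop:relativeformsUVE}.(i)), the short exact sequence \eqref{eqn:exactsequencedifferentialspushforward} splits locally, yielding a decomposition
\[
g_*\Omega^k_{A^\natural/T}\cong\bigoplus_{i+j=k}\Omega^i_{S/T}\otimes g_*\Omega^j_{A^\natural/S}.
\]
Part (i) identifies $g_*F^p\Omega^k_{A^\natural/T}$ with $\bigoplus_{i\geq p,\,i+j=k}\Omega^i_{S/T}\otimes g_*\Omega^j_{A^\natural/S}$. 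Any local section of such a summand is a sum of elementary tensors $\omega\otimes\beta$ with $\omega\in\Omega^i_{S/T}$, $i\geq p$, and $\beta\in g_*\Omega^{k-i}_{A^\natural/S}$; locally on $S$ any such $\omega$ can be written as a sum of terms of the form $\omega'\wedge\omega''$ with $\omega'\in\Omega^p_{S/T}$ and $\omega''\in\Omega^{i-p}_{S/T}$ (since $S/T$ is smooth), so $\omega\otimes\beta=\omega'\wedge(\omega''\otimes\beta)$ lies in the image of the map in question.

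The main technical point is the vanishing $R^ig_*F^p\Omega^k_{A^\natural/T}=0$ for $i>0$; once this is in hand, the remaining steps reduce to the projection formula and local bookkeeping. I do not anticipate a serious obstacle beyond maintaining compatibility between the local splittings and the canonical maps in (ii).
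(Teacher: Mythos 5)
Your argument for (i) is essentially the paper's: both run a descending induction on $p$ using the $g_*$-acyclicity of the graded pieces $F^p/F^{p+1}\cong g^\ast\Omega^p_{S/T}\otimes \Omega^\bullet_{A^\natural/S}[-p]$, which follows from Theorem \ref{thm:Laumon}, the projection formula, and Proposition \ref{prop:relativeformsUVE}; working degree by degree as you do even makes the finiteness of the filtration automatic ($F^p\Omega^k_{A^\natural/T}=0$ for $p>k$), where the paper instead first reduces to $S\to T$ of finite presentation. For (ii) your route differs in its second half: the paper compares the two filtrations $G^p\coloneqq\operatorname{im}\bigl(\Omega^p_{S/T}\otimes g_*\Omega^\bullet_{A^\natural/T}[-p]\to g_*\Omega^\bullet_{A^\natural/T}\bigr)$ and $g_*F^p$ by computing the graded quotients $G^p/G^{p+1}$ and applying the five-lemma with descending induction, whereas you prove the reverse inclusion by a direct local computation. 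That computation is correct, but one step is asserted too quickly: part (i) by itself only identifies the graded pieces of $g_*F^\bullet$, not the submodules $g_*F^p\Omega^k_{A^\natural/T}$ with the sub-sums $\bigoplus_{i\geq p}\Omega^i_{S/T}\otimes g_*\Omega^j_{A^\natural/S}$ of your chosen decomposition. To justify this, observe that the local splitting of \eqref{eqn:exactsequencedifferentialspushforward} pulls back under $g^\ast$ (via Proposition \ref{prop:relativeformsUVE}.(ii) and Proposition \ref{prop:absoluteformsUVE}.(iii)) to a splitting of \eqref{eqn:exactsequencedifferentialsUVE} that is adapted to the filtration upstairs, i.e. $F^p\Omega^k_{A^\natural/T}=\bigoplus_{i\geq p}g^\ast\Omega^i_{S/T}\otimes\Omega^j_{A^\natural/S}$; applying $g_*$ and the projection formula then yields the desired identification (and in fact bypasses (i) entirely at this point). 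With that supplement, your observation that the wedge map $\Omega^p_{S/T}\otimes\Omega^{i-p}_{S/T}\to\Omega^i_{S/T}$ is locally surjective for $i\geq p$ closes the argument. Both approaches rest on the same inputs (local splitness, Proposition \ref{prop:relativeformsUVE}.(iii), Proposition \ref{prop:absoluteformsUVE}.(iv), projection formula); yours is more explicitly computational, the paper's more formal and splitting-independent in its final step.
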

	
	\begin{proof}
		As the statement is local on $S$, we may assume that $S\rightarrow T$ is of finite presentation, which implies in particular that $F^p=0$ for $p\gg 0$. To prove assertion (i), consider the short exact sequence of complexes
		\[
		\begin{tikzcd}
			0\arrow{r}&F^{p+1}\arrow{r}&F^p\arrow{r}&F^p/F^{p+1}\arrow{r}&0.
		\end{tikzcd}
		\]
		Theorem \ref{thm:Laumon} and the projection formula imply that each term of $F^p/F^{p+1}\cong g^\ast\Omega^p_{S/T}\otimes \Omega^\bullet_{A^\natural/S}[-p]$ is $g_*$-acyclic. Thus, by descending induction on $p$ (which is possible as the filtration $\{F^p\}_{p\geq 0}$ is finite) and by the long exact sequence in cohomology, pushforward along $g$ yields a short exact sequence of complexes
		\[
		\begin{tikzcd}
			0\arrow{r}&g_*F^{p+1}\arrow{r}&g_*F^p\arrow{r}&g_*\left(F^p/F^{p+1}\right)\arrow{r}&0,
		\end{tikzcd}
		\]
		from which assertion (i) follows immediately.
		
		Now, for assertion (ii), consider the natural map given by adjunction
		\begin{equation} \label{eqn:adjunctioncomplexes}
			\Omega^p_{S/T}\otimes g_*\Omega^\bullet_{A^\natural/T}[-p] \longrightarrow g_*(g^\ast\Omega^p_{S/T}\otimes \Omega^\bullet_{A^\natural/T}[-p])\longrightarrow g_*F^p,
		\end{equation}
		and set $G^p=\operatorname{im}\left(\Omega^p_{S/T}\otimes g_*\Omega^\bullet_{A^\natural/T}[-p] \rightarrow g_*\Omega^\bullet_{A^\natural/T}\right)$. Since $g_*F^p$ is a subcomplex of $g_*F^0=G^0$, the universal property of images implies that \eqref{eqn:adjunctioncomplexes} factors through a map $G^p\rightarrow g_*F^p$. We thus obtain a commutative diagram
		\begin{equation} \label{eqn:diagrampushforwardfiltration}
			\begin{tikzcd}
				0\arrow{r}& G^{p+1}\arrow{r}\arrow{d}&G^p\arrow{r}\arrow{d}&G^p/G^{p+1}\arrow{r}\arrow{d}&0\\
				0\arrow{r}&g_*F^{p+1}\arrow{r}&g_*F^p\arrow{r}&g_*F^p/g_*F^{p+1}\arrow{r}&0.
			\end{tikzcd}
		\end{equation}
		On the other hand, it follows from Proposition \ref{prop:relativeformsUVE}.(iii), Proposition \ref{prop:absoluteformsUVE}.(iv), and the fact that \eqref{eqn:exactsequencedifferentialspushforward} is locally split that $G^p/G^{p+1} \cong \Omega^p_{S/T}\otimes g_*\Omega^\bullet_{A^\natural/S}[-p]$. Hence, the right-hand arrow in \eqref{eqn:diagrampushforwardfiltration} is an isomorphism, and by descending induction on $p$ together with the five-lemma we conclude that $G^p\rightarrow g_*F^p$ is an isomorphism. This proves assertion (ii).
	\end{proof}

	\subsection{Universal vector extensions and de Rham cohomology}
	
	The de Rham cohomology of $A/S$ can be described in terms of global differentials on $A^\natural/S$ as follows. 
	\begin{proposition}[cf. {\cite[Theorem 2.2]{Coleman}}] \label{prop:UVEdeRham}
		Let $q\geq 0$. If $\operatorname{char}(S)=0$, then there are canonical isomorphisms of $\mathcal{O}_S$-modules
		\begin{equation} \label{eqn:UVEdeRham}
			g_*\Omega^q_{A^\natural/S}\stackrel{\sim}\longrightarrow H^q_{\rm dR}(A^\natural/S) \stackrel{\sim}\longleftarrow H^q_{\rm dR}(A/S),
		\end{equation}
		where the right-hand arrow is induced by $\pi: A^\natural\rightarrow A$ \eqref{eqn:UVE}.
	\end{proposition}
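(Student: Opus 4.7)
The plan is to establish the two isomorphisms separately: the left arrow will fall out from the machinery of Section~\ref{ssec:differentialUVE}, while the right arrow will require a separate homotopy-invariance argument for affine bundles.

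For the left arrow, I would analyze the Hodge-to-de Rham spectral sequence
\[
E_1^{p,q} = R^q g_* \Omega^p_{A^\natural/S} \Longrightarrow H^{p+q}_{\rm dR}(A^\natural/S).
\]
Proposition~\ref{prop:relativeformsUVE} asserts that the adjunction $e^\ast \Omega^p_{A^\natural/S} \xrightarrow{\sim} Rg_* \Omega^p_{A^\natural/S}$ is an isomorphism in $D^{\rm b}_{\rm Qcoh}(\mathcal O_S)$, so in particular $R^q g_* \Omega^p_{A^\natural/S} = 0$ for $q > 0$. The spectral sequence therefore collapses onto its bottom row, giving $H^n_{\rm dR}(A^\natural/S) \cong H^n(g_* \Omega^\bullet_{A^\natural/S})$. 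Proposition~\ref{prop:relativeformsUVE}.(iv) then tells us that the differentials of the complex $g_* \Omega^\bullet_{A^\natural/S}$ vanish identically, so we obtain the desired left-hand isomorphism $g_* \Omega^n_{A^\natural/S} \xrightarrow{\sim} H^n_{\rm dR}(A^\natural/S)$.

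For the right arrow, my strategy is to prove the stronger claim that the pullback morphism $\pi^\ast \colon \Omega^\bullet_{A/S} \to R\pi_* \Omega^\bullet_{A^\natural/S}$ is a quasi-isomorphism in $D^+(A)$; the right-hand isomorphism then follows by applying $Rf_*$ and passing to $H^q$. Since $\pi$ is an fppf-torsor under the vector $A$-group scheme $\mathbb V(f^\ast R^1 f_* \mathcal O_A)$ and such torsors are classified by the (Zariski) first cohomology of a quasi-coherent sheaf, which vanishes on affine opens, $\pi$ is Zariski-locally on $A$ an $\mathbb A^n$-bundle. The algebraic Poincaré lemma in characteristic zero then yields $R\pi_* \Omega^\bullet_{A^\natural/A} \simeq \mathcal O_A$. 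I would filter $\Omega^\bullet_{A^\natural/S}$ by
\[
F^p := \operatorname{im}\bigl( \pi^\ast \Omega^p_{A/S} \otimes \Omega^{\bullet - p}_{A^\natural/S} \to \Omega^\bullet_{A^\natural/S} \bigr),
\]
so that $F^p / F^{p+1} \cong \pi^\ast \Omega^p_{A/S} \otimes_{\mathcal O_{A^\natural}} \Omega^{\bullet - p}_{A^\natural/A}$. The projection formula combined with the Poincaré lemma gives $R\pi_*(F^p/F^{p+1}) \simeq \Omega^p_{A/S}[-p]$. Equipping $\Omega^\bullet_{A/S}$ with the stupid filtration $\sigma^{\geq p}$, whose graded pieces are also $\Omega^p_{A/S}[-p]$, the map $\pi^\ast$ respects the filtrations and induces an isomorphism on each graded piece, whence a quasi-isomorphism by the standard spectral-sequence comparison.

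The main technical obstacle will be verifying that the induced map on graded pieces is the canonical iso, i.e., checking that the composition $\Omega^p_{A/S}[-p] \to R\pi_*(F^p/F^{p+1}) \simeq \Omega^p_{A/S}[-p]$ corresponds to the identity under the Poincaré trivialization $R\pi_* \Omega^\bullet_{A^\natural/A} \simeq \mathcal O_A$; this requires tracing the degree-$p$ component of the pullback map carefully through the projection formula. Once this compatibility is in place, applying $Rf_*$ and taking $q$-th cohomology delivers the desired right-hand isomorphism.
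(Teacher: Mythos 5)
Your argument is correct and follows essentially the same route as the paper: the left-hand isomorphism is obtained exactly as in the paper's proof (acyclicity of $\Omega^q_{A^\natural/S}$ from Proposition \ref{prop:relativeformsUVE} degenerates the hypercohomology to $H^q(g_*\Omega^\bullet_{A^\natural/S})$, and part (iv) kills the differentials), while for the right-hand arrow you simply expand into a full Katz--Oda-style filtration and Poincar\'e-lemma argument what the paper dispatches in one line by citing the K\"unneth formula for the affine bundle $\pi$ in characteristic zero. The extra detail (Zariski-local triviality of the torsor, $R\pi_*\Omega^\bullet_{A^\natural/A}\simeq\mathcal{O}_A$, and the compatibility of graded pieces) is all sound and arguably makes the right-hand isomorphism more self-contained than in the paper.
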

	
	\begin{proof}
		Since ${\rm char}(S) = 0$ and $\pi$ is an affine bundle, it follows from the K\"unneth formula that the right-hand arrow is an isomorphism. For the left-hand arrow, we note that Proposition \ref{prop:relativeformsUVE} implies that $\Omega^q_{A^{\natural}/S}$ is $g_*$-acylic for every $q\ge 0$. Thus, we obtain canonical isomorphisms $H^q(g_*\Omega^\bullet_{A^\natural/S})\cong H^q_{\rm dR}(A^\natural/S)$. On the other hand, by Proposition \ref{prop:relativeformsUVE}.(iv), we have $H^q(g_*\Omega^\bullet_{A^\natural/S})=g_*\Omega^q_{A^\natural/S}$, ending the proof.
	\end{proof}
	
	\begin{remark}
		Without any assumption on the characteristic of $S$, one can show that $H^q_{\rm dR}(A/S)\cong e^\ast\Omega^q_{A^\natural/S}$, for all $q\geq 0$ \cite[4.1.7]{MazurMessing}.
	\end{remark}
	
	\section{\texorpdfstring{$D$}{}-group scheme structure on the universal vector extension}
	
	Throughout this section, let $T$ be an arbitrary scheme of characteristic zero.
	
	\subsection{Review of \texorpdfstring{$D$}{}-group schemes}
	
	Let $S$ be a smooth $T$-scheme. Recall that a $D$\emph{-scheme} over $S$ (cf. \cite[6.1]{Bost}) is a pair $(X,\mathcal{F})$ consisting of a smooth $S$-scheme $g: X\rightarrow S$, and an \emph{integrable} $\mathcal{O}_X$-submodule $\mathcal{F}\hookrightarrow \mathcal{T}_{X/T}$ (i.e., $\mathcal{F}$ is closed under the Lie bracket of vector fields) which splits the exact sequence
	\[
	\begin{tikzcd}
		0\arrow{r}&\mathcal{T}_{X/S}\arrow{r}&\mathcal{T}_{X/T}\arrow{r}{Dg}&g^\ast\mathcal{T}_{S/T}\arrow{r}&0.
	\end{tikzcd}
	\]
	A \emph{morphism of $D$-schemes} $(X_1,\mathcal{F}_1)\rightarrow (X_2,\mathcal{F}_2)$ is a morphism of $S$-schemes $\phi: X_1\rightarrow X_2$ whose `absolute' differential $D\phi: \mathcal{T}_{X_1/T}\rightarrow \phi^*\mathcal{T}_{X_2/T}$ maps $\mathcal{F}_1$ into $\phi^*\mathcal{F}_2$. The category of $D$-schemes admits finite products \cite[6.1]{Bost}, and a \emph{$D$-group scheme} is defined as a group object in the category of $D$-schemes.
	
	In this paper, the dual point of view is more convenient. To give $\mathcal{F}$ as above is equivalent to giving an \emph{integrable} $\mathcal{O}_X$-submodule $\mathcal{G}\hookrightarrow \Omega^1_{X/T}$ (i.e., $d\mathcal{G}\subset \operatorname{im}(\mathcal{G}\otimes \Omega^1_{X/T}\stackrel{\wedge}{\rightarrow} \Omega^2_{X/T})$) which splits the dual exact sequence:
	\begin{equation} \label{eqn:exactsequencedifferentials}
		\begin{tikzcd}
			0\arrow{r}&g^\ast\Omega^1_{S/T}\arrow{r}&\Omega^1_{X/T}\arrow{r}&\Omega^1_{X/S}\arrow{r}&0.
		\end{tikzcd}
	\end{equation}
	The equivalence is given explicitly by setting $\mathcal{F}=(\Omega^1_{X/T}/\mathcal{G})^\vee$ (cf. \cite[Ch. II, \S 2.4]{HectorHirsch}). Then, a morphism of $S$-schemes $\phi: X_1 \to X_2$ is a morphism of $D$-schemes if the pullback map $\phi^*\Omega^1_{X_2/T} \to \Omega^1_{X_1/T}$ sends $\phi^*\mathcal{G}_2$ to $\mathcal{G}_1$.
	
	\begin{example}[Linear $D$-group schemes]\label{ex:linear-d-group-scheme}
		For later reference, let us recall how an integrable $T$-connection $\nabla : \mathcal{E} \to \Omega^1_{S/T}\otimes \mathcal{E}$ on a locally free $\mathcal{O}_S$-module of finite rank $\mathcal{E}$ defines a $D$-group scheme structure on the vector group $p:\mathbb{V}(\mathcal{E}) \to S$. By adjunction, the inclusion $\mathcal{E} \hookrightarrow \mathrm{Sym}(\mathcal{E}) \cong p_*\mathcal{O}_{\mathbb{V}(\mathcal{E})}$ yields a morphism $p^*\mathcal{E} \hookrightarrow \mathcal{O}_{\mathbb{V}(\mathcal{E}})$. Thus, we can regard the pullback of $\nabla$ as a map 
		\[
		p^*\nabla : p^*\mathcal{E} \rightarrow \Omega^1_{\mathbb{V}(\mathcal{E})/T}\otimes p^*\mathcal{E} \hookrightarrow \Omega^1_{\mathbb{V}(\mathcal{E})/T}.
		\]
		On the other hand, we can also consider the exterior derivative $d : \mathcal{O}_{\mathbb{V}(\mathcal{E})} \to \Omega^1_{\mathbb{V}(\mathcal{E})/T}$ and restrict it to $p^*\mathcal{E}$. The difference of these two maps is an $\mathcal{O}_{\mathbb{V}(\mathcal{E})}$-linear morphism
		\[
		\sigma_{\nabla} = d - p^*\nabla : p^*\mathcal{E} \to \Omega^1_{\mathbb{V}(\mathcal{E})/T}
		\]
		which splits the exact sequence
		\begin{equation}
			\begin{tikzcd}
				0\arrow{r}&p^\ast\Omega^1_{S/T}\arrow{r}&\Omega^1_{\mathbb{V}(\mathcal{E})/T}\arrow{r}&\Omega^1_{\mathbb{V}(\mathcal{E})/S}\arrow{r}&0
			\end{tikzcd}
		\end{equation}
		under the canonical isomorphism $\Omega^1_{\mathbb{V}(\mathcal{E})/S}\cong p^*\mathcal{E}$. Explicitly, if $(x_1,\ldots,x_r)$ is a local framing of $\mathcal{E}$ and $\nabla x_j = \sum_{i=1}^r\alpha_{ij}\otimes x_i$, then
		\[
		\sigma_{\nabla} (dx_j) = dx_j - \sum_{i=1}^r x_i \alpha_{ij}.
		\]
		One easily checks that $(\mathbb{V}(\mathcal{E}),\mathrm{im}(\sigma_{\nabla}))$ is a $D$-group scheme over $S$.
	\end{example}
	
	\subsection{The case of the universal vector extension}
	
	Let $g: A^\natural\rightarrow S$ be the universal vector extension of an abelian scheme $f: A\rightarrow S$, and denote by $e \in A^{\natural}(S)$ the zero section.
	
	\begin{theorem} \label{thm:canonicalsplitting}
		With the above notation:
		\begin{itemize}
			\item[(i)]
			Let $e^\ast: \Omega^1_{A^\natural/T}\rightarrow e_*\Omega^1_{S/T}$ be the morphism given by pullback along the zero section. The map $\rho \coloneqq g_*(e^\ast): g_*\Omega^1_{A^\natural/T}\rightarrow \Omega^1_{S/T}$ is a retraction of \eqref{eqn:exactsequencedifferentialspushforward}.
			\item[(ii)]
			Let $\mathcal{N}\coloneqq \ker(\rho) \hookrightarrow g_*\Omega^1_{A^\natural/T}$. We have
			\begin{equation} \label{eqn:strongerintegrability}
				d\mathcal{N} \subset \operatorname{im}(\Omega^1_{S/T}\otimes \mathcal{N}\stackrel{\wedge}\longrightarrow g_*\Omega^2_{A^\natural/T}),
			\end{equation}
			where $d=d_{A^\natural/T}$.
		\end{itemize}
	\end{theorem}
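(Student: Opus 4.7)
For part (i), the inclusion $\Omega^1_{S/T}\hookrightarrow g_*\Omega^1_{A^\natural/T}$ in \eqref{eqn:exactsequencedifferentialspushforward} arises from applying $g_*$ to the natural map $g^*\Omega^1_{S/T}\hookrightarrow\Omega^1_{A^\natural/T}$, combined with the canonical isomorphism $\Omega^1_{S/T}\cong g_*g^*\Omega^1_{S/T}$ (which holds by Theorem~\ref{thm:Laumon} and the projection formula). Since $g\circ e = \mathrm{id}_S$, pullback of forms satisfies $e^*\circ g^* = \mathrm{id}$; applying $g_*$ then shows that $\rho$ splits \eqref{eqn:exactsequencedifferentialspushforward}.

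My plan for (ii) has two steps. First, I would show that $d\tilde\alpha \in g_*F^1$ in degree $2$ for every $\tilde\alpha\in\mathcal{N}$. This follows from Proposition~\ref{prop:relativeformsUVE}.(iv): the image $\bar\alpha\in g_*\Omega^1_{A^\natural/S}$ of $\tilde\alpha$ is closed for $d_{A^\natural/S}$, so the image of $d\tilde\alpha$ under the natural surjection $g_*\Omega^2_{A^\natural/T}\twoheadrightarrow g_*\Omega^2_{A^\natural/S}$ (which exists by the argument of Proposition~\ref{prop:absoluteformsUVE}.(i) applied in degree $2$) vanishes. By Proposition~\ref{prop:filtrationpushforward}.(ii), $g_*F^1$ in degree $2$ coincides with the image of $\Omega^1_{S/T}\otimes g_*\Omega^1_{A^\natural/T}$ in $g_*\Omega^2_{A^\natural/T}$. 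Using the splitting from part (i), this image decomposes as the sum of the image of $\Omega^1_{S/T}\otimes\mathcal{N}$ and the canonical inclusion of $\Omega^2_{S/T}$ into $g_*\Omega^2_{A^\natural/T}$.

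The second step eliminates the $\Omega^2_{S/T}$-component. Letting $\rho^{(2)}\coloneqq g_*(e^*): g_*\Omega^2_{A^\natural/T}\to\Omega^2_{S/T}$ be the degree-$2$ analogue of $\rho$, commutativity of pullback with the exterior derivative yields $\rho^{(2)}(d\tilde\alpha) = d\,\rho(\tilde\alpha) = 0$. Since $e^*$ is a morphism of graded algebras, $\rho^{(2)}$ restricts to the identity on the image of $\Omega^2_{S/T}$ and kills every wedge $\omega\wedge\xi$ with $\omega\in\Omega^1_{S/T}$ and $\xi\in\mathcal{N}$ (as $e^*\xi = 0$). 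Decomposing $d\tilde\alpha = \beta_1 + \beta_2$ along the two summands and applying $\rho^{(2)}$ then forces $\beta_1 = 0$, proving \eqref{eqn:strongerintegrability}. I expect the main obstacle to be this final step: upgrading the coarse membership $d\mathcal{N}\subset g_*F^1$ to the sharper \eqref{eqn:strongerintegrability} requires combining the explicit generating set for $g_*F^1$ from Proposition~\ref{prop:filtrationpushforward}.(ii) with the compatibility of $e^*$ with both the wedge product and the differential.
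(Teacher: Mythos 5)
Your proof is correct and follows essentially the same route as the paper: part (i) is the observation that $e$ is a section of $g$, and part (ii) combines Proposition \ref{prop:relativeformsUVE}.(iv) with Proposition \ref{prop:filtrationpushforward}.(ii) to land $d\mathcal{N}$ in $\operatorname{im}(\Omega^1_{S/T}\otimes g_*\Omega^1_{A^\natural/T})\cong\Omega^2_{S/T}\oplus(\Omega^1_{S/T}\otimes\mathcal{N})$, and then uses $e^*d = d e^*$ together with multiplicativity of $e^*$ to kill the $\Omega^2_{S/T}$-component. The only cosmetic difference is that the paper phrases the last step as an intersection of two summand-submodules inside the three-term decomposition $\Omega^2_{S/T}\oplus(\Omega^1_{S/T}\otimes\mathcal{N})\oplus\bigwedge^2\mathcal{N}$ of $g_*\Omega^2_{A^\natural/T}$, whereas you apply the degree-two retraction $\rho^{(2)}$ directly; these are the same argument.
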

	
	\begin{proof}
		Assertion (i) follows immediately from the fact that $e$ is a section of $g$. For assertion (ii), consider the decomposition
		\[
		g_*\Omega^2_{A^\natural/T}\cong \Omega^2_{S/T}\oplus (\Omega^1_{S/T}\otimes \mathcal{N})\oplus \bigwedge^2\mathcal{N},
		\]
		whose existence follows from Proposition \ref{prop:absoluteformsUVE}.(iv) and assertion (i). Proposition \ref{prop:relativeformsUVE}.(iv) then implies that the composite
		$
		d\mathcal{N} \rightarrow g_*\Omega^2_{A^\natural/T} \rightarrow g_*\Omega^2_{A^\natural/S}
		$
		is zero. Therefore, using Proposition \ref{prop:filtrationpushforward}.(ii), we get
		\[
		d\mathcal{N}\subset \ker(g_*\Omega^2_{A^\natural/T}\longrightarrow g_*\Omega^2_{A^\natural/S})=\operatorname{im}(\Omega^1_{S/T}\otimes g_*\Omega^1_{A^\natural/T}\stackrel{\wedge}\longrightarrow g_*\Omega^2_{A^\natural/T})\cong \Omega^2_{S/T}\oplus (\Omega^1_{S/T}\otimes \mathcal{N}).
		\]
		On the other hand, since pullbacks commute with the exterior derivative, given any section $\omega$ of $\mathcal{N}$, we have $e^\ast d\omega=0$. This shows that $d\omega$ is a section of $(\Omega^1_{S/T}\otimes \mathcal{N})\oplus \bigwedge^2\mathcal{N}$, under the above decomposition. Therefore,
		\[
		d\mathcal{N}\subset \left(\Omega^2_{S/T}\oplus (\Omega^1_{S/T}\otimes \mathcal{N})\right) \cap \left((\Omega^1_{S/T}\otimes \mathcal{N})\oplus \bigwedge^2\mathcal{N}\right)  \cong \operatorname{im}(\Omega^1_{S/T}\otimes \mathcal{N}\stackrel{\wedge}\longrightarrow g_*\Omega^2_{A^\natural/T}).\qedhere
		\]
	\end{proof}
	
	\begin{remark}
		Theorem \ref{thm:canonicalsplitting}.(i) generalizes to the case where $A^\natural$ is replaced by a finite (possibly empty) product $(A^\natural)^n\coloneqq A^\natural\times_S \cdots \times_S A^\natural$. Namely, denoting (by abuse) the structure morphism of $(A^\natural)^n$ by $g$ and its zero section by $e$, the morphism $\rho \coloneqq g_*(e^\ast): g_*\Omega^1_{(A^\natural)^n/T} \rightarrow \Omega^1_{S/T}$ is a retraction of the natural map $\Omega^1_{S/T}\hookrightarrow g_*\Omega^1_{(A^\natural)^n/T}$. The key point is that the analogue of Theorem \ref{thm:Laumon} holds for $(A^\natural)^n$ by the K\"unneth formula \cite[\href{https://stacks.math.columbia.edu/tag/0FLT}{Lemma 0FLT}]{Stacks}.
	\end{remark}
	
	Using Theorem \ref{thm:canonicalsplitting}, we can now prove the main result of this note.
	
	\begin{theorem} \label{thm:Dgroupscheme}
		Let $\mathcal{I} \coloneqq g^\ast\mathcal{N}$. The pair $(A^\natural,\mathcal{I})$ is a $D$-group scheme over $S$.
	\end{theorem}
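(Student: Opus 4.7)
My plan is to verify successively that: (a) $\mathcal{I}\hookrightarrow\Omega^1_{A^\natural/T}$ is an integrable submodule splitting the exact sequence \eqref{eqn:exactsequencedifferentialsUVE}, so that $(A^\natural,\mathcal{I})$ is a $D$-scheme; and (b) multiplication, inversion, and the zero section of $A^\natural$ are morphisms of $D$-schemes with respect to the appropriate product $D$-structure on $(A^\natural)^n$.

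For (a), the splitting is obtained by applying the exact functor $g^\ast$ to the decomposition $g_*\Omega^1_{A^\natural/T}\cong\Omega^1_{S/T}\oplus\mathcal{N}$ from Theorem~\ref{thm:canonicalsplitting}.(i), combined with the isomorphism $g^\ast g_*\Omega^1_{A^\natural/T}\cong\Omega^1_{A^\natural/T}$ of Proposition~\ref{prop:absoluteformsUVE}.(iii); this yields a decomposition $\Omega^1_{A^\natural/T}\cong g^\ast\Omega^1_{S/T}\oplus\mathcal{I}$. For integrability, I argue locally: every section of $\mathcal{I}$ has the form $\omega=\sum_i f_i\omega_i$ with $f_i\in\mathcal{O}_{A^\natural}$ and each $\omega_i$ the image of a section of $\mathcal{N}$. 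The Leibniz rule gives $d\omega=\sum_i df_i\wedge\omega_i+\sum_i f_i\,d\omega_i$: the first sum clearly lies in $\Omega^1_{A^\natural/T}\wedge\mathcal{I}$, and Theorem~\ref{thm:canonicalsplitting}.(ii) rewrites each $d\omega_i$ as a sum of wedge products of sections of $g^\ast\Omega^1_{S/T}$ with sections of $\mathcal{I}$, which places the second sum inside $g^\ast\Omega^1_{S/T}\wedge\mathcal{I}\subset\Omega^1_{A^\natural/T}\wedge\mathcal{I}$ as well.

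For (b), I invoke the analogue of Theorem~\ref{thm:canonicalsplitting} for the fibre square $(A^\natural)^2=A^\natural\times_S A^\natural$ pointed out in the remark following that theorem, which furnishes a retraction $\rho^{(2)}$ with kernel $\mathcal{N}^{(2)}\subset g^{(2)}_*\Omega^1_{(A^\natural)^2/T}$; the argument in (a) then shows that $((A^\natural)^2,\mathcal{I}^{(2)})$ with $\mathcal{I}^{(2)}\coloneqq(g^{(2)})^\ast\mathcal{N}^{(2)}$ is itself a $D$-scheme. The three morphism checks reduce to the evident identities $m\circ e^{(2)}=e$, $[-1]\circ e=e$, and $g\circ e=\mathrm{id}_S$. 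For example, for any local section $\omega$ of $\mathcal{N}$, the equality $(e^{(2)})^\ast m^\ast\omega=(m\circ e^{(2)})^\ast\omega=e^\ast\omega=0$ forces $m^\ast\omega\in\mathcal{N}^{(2)}$, whence $m^\ast\mathcal{I}\subset\mathcal{I}^{(2)}$; inversion and the zero section are treated in the same way, the latter using that the $D$-structure on $S$ as a smooth $S$-scheme is trivial and that $\rho|_\mathcal{N}=0$ by definition.

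The principal technical obstacle I anticipate is to verify that $\mathcal{I}^{(2)}$ really is the categorical product $D$-structure on $(A^\natural,\mathcal{I})\times_S(A^\natural,\mathcal{I})$ in the sense of \cite[6.1]{Bost}. Concretely, this requires checking that both projections $p_1,p_2\colon(A^\natural)^2\to A^\natural$ are themselves $D$-morphisms (which follows by the same computation, using $p_i\circ e^{(2)}=e$) and that $\mathcal{I}^{(2)}$ coincides with $p_1^\ast\mathcal{I}+p_2^\ast\mathcal{I}$ as submodules of $\Omega^1_{(A^\natural)^2/T}$; the latter is essentially a Künneth-type decomposition $\mathcal{N}^{(2)}\cong\mathcal{N}\oplus\mathcal{N}$, itself a consequence of the analogue of Theorem~\ref{thm:Laumon} for $(A^\natural)^2$. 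Once this identification is in place, the group axiom diagrams are automatic, since they already commute at the level of the underlying schemes.
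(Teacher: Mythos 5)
Your proposal is correct and follows essentially the same route as the paper: the splitting comes from applying the exact functor $g^\ast$ to Theorem \ref{thm:canonicalsplitting}.(i) via Proposition \ref{prop:absoluteformsUVE}.(iii), integrability is deduced from Theorem \ref{thm:canonicalsplitting}.(ii) (you do this locally with the Leibniz rule, the paper equivalently by exactness of $g^\ast$ applied to the image sheaf), and the group-law compatibilities reduce to $m\circ(e\times e)=e$, $i\circ e=e$, and $e^\ast|_{\mathcal{N}}=0$ exactly as in the paper. Your final paragraph, identifying $\mathcal{I}^{(2)}$ with the categorical product $D$-structure via the K\"unneth decomposition $\mathcal{N}^{(2)}\cong \mathcal{N}\oplus\mathcal{N}$, spells out a point the paper's proof leaves implicit.
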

	
	\begin{proof}
		By Proposition \ref{prop:absoluteformsUVE} and Theorem \ref{thm:canonicalsplitting}, the submodule $\mathcal{I} \hookrightarrow g^\ast g_*\Omega^1_{A^\natural/T}=\Omega^1_{A^\natural/T}$ is a splitting of \eqref{eqn:exactsequencedifferentials}. To prove integrability, note that Theorem \ref{thm:canonicalsplitting}.(ii) implies in particular that $d\mathcal{N} \subset \operatorname{im}(\mathcal{N}\otimes g_*\Omega^1_{A^\natural/T}\stackrel{\wedge}\rightarrow g_*\Omega^2_{A^\natural/T})$. Therefore, using the fact that the canonical map
		\[
		g^\ast\operatorname{im}(\mathcal{N}\otimes g_*\Omega^1_{A^\natural/T}\stackrel{\wedge}\longrightarrow g_*\Omega^2_{A^\natural/T}) \longrightarrow \operatorname{im}(\mathcal{I}\otimes \Omega^1_{A^\natural/T}\stackrel{\wedge}\longrightarrow \Omega^2_{A^\natural/T})
		\]
		is an isomorphism (which is clear by exactness of $g^\ast$ and Proposition \ref{prop:absoluteformsUVE}), we see that $\mathcal{I}$ is integrable.
		
		Now, let $m: A^\natural\times_S A^\natural\rightarrow A^\natural$ be the multiplication, and $h: A^\natural \times_S A^\natural\rightarrow S$ be the structure morphism. Pullback along $m$ induces a morphism $g_*(m^\ast): g_*\Omega^1_{A^\natural/T}\rightarrow h_*\Omega^1_{(A^\natural\times_S A^\natural)/T}$, which sends $\ker(g_*(e^\ast))$ into $\ker(h_*((e\times e)^\ast))$ (since $e: S\rightarrow A^\natural$ is a morphism of $S$-group schemes). It follows that $m$ is a morphism of $D$-schemes. That $e$ and the inversion map $i: A^\natural\rightarrow A^\natural$ are also morphisms of $D$-schemes is proved similarly.
	\end{proof}
	
	We end this subsection by giving a rather simple formula for the Gauss--Manin connection on the de Rham cohomology of $A/S$. Let $\sigma : g_*\Omega^1_{A^{\natural}/S} \to g_*\Omega^1_{A^{\natural}/T}$ be the splitting
	\begin{equation}\label{eqn:sigma-splitting}
		\begin{tikzcd}
			0 \arrow{r} & \Omega^1_{S/T} \arrow{r} & g_*\Omega^1_{A^{\natural}/T} \arrow{r} & g_*\Omega^1_{A^{\natural}/S} \arrow{r} \arrow[bend right]{l}[swap]{\sigma} & 0
		\end{tikzcd}
	\end{equation}
	corresponding to the retraction $\rho : g_*\Omega^1_{A^{\natural}/T} \to \Omega^1_{S/T}$. Concretely, if $\omega$ is a section of $g_*\Omega^1_{A^{\natural}/S}$ (a relative differential form), then $\sigma \omega$ is the unique lift of $\omega$ to a section of $g_*\Omega^1_{A^{\natural}/T}$ (an absolute differential form) which vanishes along the zero section of $A^{\natural}$.
	
	\begin{proposition} \label{prop:UVEGM}
		Under the identification $H^1_{\rm dR}(A/S)\cong g_*\Omega^1_{A^\natural/S}$ of Proposition \ref{prop:UVEdeRham}, the Gauss--Manin connection $\nabla: H^1_{\rm dR}(A/S)\rightarrow \Omega^1_{S/T}\otimes H^1_{\rm dR}(A/S)$ is given by
		\begin{equation} \label{eqn:connectinghomomorphism}
			\nabla \omega = d\sigma \omega  \mod \Omega^2_{S/T},
		\end{equation}
		where $d = d_{A^{\natural}/T}$ and $\Omega^1_{S/T}\otimes g_*\Omega^1_{A^{\natural}/S} \cong g_*\Omega^2_{A^{\natural}/T}/\Omega^2_{S/T}$ via Proposition \ref{prop:filtrationpushforward}.
	\end{proposition}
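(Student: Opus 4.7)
The strategy is to recognize the right-hand side $d\sigma\omega \mod \Omega^2_{S/T}$ as the output of the connecting-homomorphism recipe that by definition computes the Gauss--Manin connection, after transferring the computation from $A$ to $A^\natural$. By functoriality of the Gauss--Manin connection under morphisms of smooth $S$-schemes over $T$, the isomorphism $\pi^*: H^1_{\rm dR}(A/S) \to H^1_{\rm dR}(A^\natural/S)$ underlying Proposition \ref{prop:UVEdeRham} is horizontal; hence it suffices to identify $\nabla$ on $H^1_{\rm dR}(A^\natural/S) = g_*\Omega^1_{A^\natural/S}$ with the claimed formula.

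I would first argue that the Gauss--Manin spectral sequence for $A^\natural/S/T$ is the spectral sequence of the ordinary filtered complex $(g_*\Omega^\bullet_{A^\natural/T}, g_*F^\bullet)$ on $S$. Combining Proposition \ref{prop:absoluteformsUVE}.(iii) and (iv), each $\Omega^q_{A^\natural/T}$ is of the form $g^*\mathcal{E}$ for a locally free $\mathcal{O}_S$-module $\mathcal{E}$, so by the projection formula and Theorem \ref{thm:Laumon} it is $g_*$-acyclic. Hence $Rg_*\Omega^\bullet_{A^\natural/T} \cong g_*\Omega^\bullet_{A^\natural/T}$ in $D^{\rm b}_{\rm Qcoh}(\mathcal{O}_S)$. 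Proposition \ref{prop:filtrationpushforward} then identifies the induced filtration with $g_*F^\bullet$ and gives graded pieces $g_*F^p/g_*F^{p+1} \cong \Omega^p_{S/T}\otimes g_*\Omega^\bullet_{A^\natural/S}[-p]$. Using Proposition \ref{prop:relativeformsUVE}.(iv) to conclude that the differential of $g_*\Omega^\bullet_{A^\natural/S}$ vanishes, the $E_1$-page reads $E_1^{p,q} \cong \Omega^p_{S/T}\otimes g_*\Omega^q_{A^\natural/S}$, and by definition $\nabla = d_1^{0,1}$.

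The remaining step is to compute $d_1^{0,1}$ explicitly. Given a section $\omega$ of $g_*\Omega^1_{A^\natural/S}$, the splitting $\sigma$ provides a canonical lift $\sigma\omega \in g_*\Omega^1_{A^\natural/T} = g_*F^0$. The image of $d\sigma\omega$ in $g_*\Omega^2_{A^\natural/S}$ equals $d_{A^\natural/S}\omega$, which vanishes by Proposition \ref{prop:relativeformsUVE}.(iv); therefore $d\sigma\omega$ lies in $g_*F^1 \subset g_*\Omega^2_{A^\natural/T}$, and $\nabla\omega$ is its class in $g_*F^1/g_*F^2$. Since $F^2$ in degree $2$ equals $g^*\Omega^2_{S/T}$ and $g_*\mathcal{O}_{A^\natural} = \mathcal{O}_S$ by Theorem \ref{thm:Laumon}, the projection formula yields $g_*F^2 = \Omega^2_{S/T}$ in this degree, giving exactly the stated equality $\nabla\omega = d\sigma\omega \mod \Omega^2_{S/T}$ under the identification of Proposition \ref{prop:filtrationpushforward}.

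The main obstacle I anticipate is conceptual rather than computational: the Gauss--Manin spectral sequence lives a priori at the level of derived pushforwards of a filtered complex of non-$\mathcal{O}_S$-linear sheaves on $A^\natural$, and one must justify replacing it by the honest spectral sequence of the filtered complex $(g_*\Omega^\bullet_{A^\natural/T}, g_*F^\bullet)$ on $S$. This replacement is precisely what Proposition \ref{prop:filtrationpushforward} has been designed to enable, so once that is invoked the rest of the argument is formal.
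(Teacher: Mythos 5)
Your proof is correct and follows essentially the same route as the paper: both identify $\nabla$ with $d_1^{0,1}$ of the filtered complex $(\Omega^\bullet_{A^\natural/T},F^\bullet)$, use the $g_*$-acyclicity results (Theorem \ref{thm:Laumon} together with Propositions \ref{prop:relativeformsUVE}--\ref{prop:filtrationpushforward}) to replace the derived pushforward by the honest filtered complex $(g_*\Omega^\bullet_{A^\natural/T},g_*F^\bullet)$, and then read off $d_1^{0,1}$ as the connecting homomorphism computed by the lift $\sigma\omega$. The only differences are that you make explicit two points the paper leaves implicit, namely the horizontality of $\pi^*$ and the final unwinding of the connecting map; both are welcome but do not change the argument.
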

	
	\begin{proof}
		Consider the filtration $F^p = \mathrm{im}(g^*\Omega^p_{S/T}\otimes \Omega^{\bullet}_{A^{\natural}/T}[-p] \stackrel{\wedge}{\to} \Omega^{\bullet}_{A^{\natural}/T})$ of $\Omega^{\bullet}_{A^{\natural}/T}$ (cf. \S \ref{ssec:deRham}). By Proposition \ref{prop:relativeformsUVE}, the complexes $F^p/F^{p+1}\cong g^\ast\Omega^p_{S/T}\otimes \Omega^\bullet_{A^\natural/S}[-p]$  are $g_*$-acyclic, hence the differential $d_1^{0,1}: E_1^{0,1}\rightarrow E_1^{1,1}$ on the first page of the spectral sequence corresponding to $\{F^p\}_{p\ge 0}$ is given by the connecting homomorphism in the long exact sequence associated to
		\[
		\begin{tikzcd}
			0\arrow{r}&g_*(F^1/F^2)\arrow{r}\arrow[equal]{d}&g_*(F^0/F^2)\arrow{r}\arrow[equal]{d}&g_*(F^0/F^1)\arrow{r}\arrow{r}\arrow[equal]{d}&0\\
			0\arrow{r}&\Omega^1_{S/T}\otimes g_*\Omega^\bullet_{A^\natural/S}[-1]\arrow{r}&g_*(F^0/F^2)\arrow{r}&g_*\Omega^\bullet_{A^\natural/S}\arrow{r}\arrow{r}&0.
		\end{tikzcd}
		\]
		The desired assertion is now immediate.
	\end{proof}
	
	\begin{remark}
		Using that $H^q_{\rm dR}(A/S) \cong \bigwedge^q H^1_{\rm dR}(A/S)$, Proposition \ref{prop:UVEGM} also yields a similar formula for the Gauss--Manin connection on $H^q_{\rm dR}(A/S)$.
	\end{remark}
	
	\begin{remark} \label{rmk:GMsigma}
		Locally on $S$, we may choose a trivialization $\{\omega_i\}_{1\leq i\leq r}$ of $g_*\Omega^1_{A^\natural/S}$, and we may write $\nabla=d_{S/T}+M$, where $M=(\mu_{i,j})_{1\leq i,j\leq r} \in \operatorname{Mat}_{r\times r}(\Gamma(S,\Omega^1_{S/T}))$. Then, Theorem \eqref{thm:canonicalsplitting}.(ii) implies that
		\begin{equation} \label{eqn:differentialcanonicalsplitting}
			d\sigma\omega_j=\sum_{i=1}^r\mu_{i,j}\wedge\sigma\omega_i,
		\end{equation}
		in $g_*\Omega^2_{A^\natural/T}$, for all $1\leq j\leq r$. In other words, the lifts $\sigma\omega_j$ satisfy the Gauss--Manin equation on the nose, and not just modulo the submodule $\Omega^2_{S/T} \subset g_*\Omega^2_{A^\natural/T}$.
	\end{remark}
	
	\subsection{Comparison with the canonical analytic \texorpdfstring{$D$}{}-group scheme structure} \label{ssec:AnalyticDgroupscheme}
	
	Now, assume that $T=\operatorname{Spec} \mathbb C$, and consider the vector group $p: V \coloneqq \mathbb{V}(g_*\Omega^1_{A^{\natural}/S})\rightarrow S$ with the linear $D$-group scheme structure $\mathcal{I}_{\nabla} = \mathrm{im}(\sigma_{\nabla})$ induced by the Gauss--Manin connection $\nabla: g_*\Omega^1_{A^\natural/S}\rightarrow \Omega^1_{S/\mathbb C}\otimes g_*\Omega^1_{A^\natural/S}$ (under the identification $g_*\Omega^1_{A^{\natural}/S} \cong H^1_{\rm dR}(A/S)$), as explained in Example \ref{ex:linear-d-group-scheme}. By analytification, the pair $(V^{\rm an},\mathcal{I}^{\rm an}_{\nabla})$ is then an analytic $D$-group scheme over $S^{\rm an}$ (cf. \cite[6.2]{Bost}).
	
	The following proposition implies that $\mathcal{I}^{\rm an}_{\nabla}$ descends to $A^{\natural,\rm an}$ along the uniformization map $\exp: V^{\rm an}\rightarrow A^{\natural,\rm an}$ \eqref{eqn:uniformization}.
	\begin{proposition} \label{prop:descent}
		The $\mathcal{O}_{V^{\rm an}}$-submodule $\mathcal{I}^{\rm an}_{\nabla} \hookrightarrow \Omega^1_{V^{\rm an}}$ is $L$-invariant.
	\end{proposition}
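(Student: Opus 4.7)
The plan is to verify $L$-invariance of $\mathcal{I}^{\rm an}_\nabla$ locally in a frame. First, on a small open of $S^{\rm an}$, I would choose a trivialization $(\omega_j)_{1\le j\le r}$ of $\mathcal{E} \coloneqq g_*\Omega^1_{A^\natural/S} \cong H^1_{\rm dR}(A/S)$, write $\nabla\omega_j = \sum_i \alpha_{ij}\otimes\omega_i$, and let $(x_j)$ be the dual linear fibre coordinates on $V^{\rm an}$. By Example \ref{ex:linear-d-group-scheme}, $\mathcal{I}^{\rm an}_\nabla$ is then locally generated as an $\mathcal{O}_{V^{\rm an}}$-module by the forms $\sigma_\nabla(dx_j) = dx_j - \sum_i x_i\,\alpha_{ij}$ (with $\alpha_{ij}$ tacitly pulled back to $V^{\rm an}$), so it suffices to show that $t_\gamma^*\sigma_\nabla(dx_j) = \sigma_\nabla(dx_j)$ for every local section $\gamma$ of $L$, where $t_\gamma \colon V^{\rm an}\to V^{\rm an}$ denotes translation by $\gamma$.

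Next, I would unwind the $L$-action in these coordinates. By the construction of the uniformization \eqref{eqn:uniformization}, $\gamma$ corresponds to the holomorphic section of $p$ sending $s$ to the functional $\alpha\mapsto \int_{\gamma_s}\alpha$ on $H^1_{\rm dR}(A_s)$; hence $t_\gamma^* x_j = x_j + p^*c_j$ with $c_j \coloneqq \int_\gamma \omega_j \in \mathcal{O}(S^{\rm an})$. The crux of the argument is then the classical period identity
\[
dc_j = \sum_i \alpha_{ij}\, c_i,
\]
which amounts to saying that one may differentiate under the integral sign, and is equivalent to the fact that $\gamma$, regarded as a section of $(H^1_{\rm dR}(A/S)^{\rm an})^\vee$, is horizontal for the dual Gauss--Manin connection. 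Granting this, a direct computation yields
\[
t_\gamma^*\sigma_\nabla(dx_j) = d(x_j + p^*c_j) - \sum_i (x_i + p^*c_i)\,\alpha_{ij} = dx_j + \sum_i \alpha_{ij}c_i - \sum_i x_i\,\alpha_{ij} - \sum_i c_i\,\alpha_{ij} = \sigma_\nabla(dx_j),
\]
since the $L$-invariance reduces to a cancellation between the $dc_j$-term coming from the translation and the linear term coming from $\sigma_\nabla$.

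The only non-formal ingredient is the flatness identity $dc_j = \sum_i \alpha_{ij}c_i$; this is where one really uses that $L$ arises from a topological local system rather than an arbitrary lift, and it encodes the dual of the Riemann--Hilbert correspondence for the Gauss--Manin connection. It can nevertheless be verified directly, using only that a fixed topological $1$-cycle on the fibres of $f$ depends locally constantly on the base parameter, together with the description of $\nabla$ as $d_1^{0,1}$ from Section \ref{ssec:deRham}. Alternatively, one can proceed more intrinsically: the integral leaves of $\mathcal{I}^{\rm an}_\nabla$ are locally the graphs of holomorphic sections of $p$ which are horizontal for $\nabla$, and translation by any horizontal section of $V^{\rm an}\to S^{\rm an}$---in particular by a section of $L$---permutes these leaves and therefore preserves the distribution $\mathcal{I}^{\rm an}_\nabla$.
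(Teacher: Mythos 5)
Your proposal is correct and follows essentially the same route as the paper: a local trivialization of $H^1_{\rm dR}(A/S)$, the explicit generators $\sigma_\nabla(dx_j)=dx_j-\sum_i x_i\alpha_{ij}$ of $\mathcal{I}^{\rm an}_\nabla$, and the period identity $d\bigl(\int_\gamma\omega_j\bigr)=\sum_i\alpha_{ij}\int_\gamma\omega_i$ characterizing the Gauss--Manin connection as the key input. You merely make explicit the translation computation that the paper leaves implicit.
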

	
	\begin{proof}
		The assertion is local on $S^{\rm an}$, so we may assume that $g_*\Omega^1_{A^\natural/S} \cong H^1_{\rm dR}(A/S)$ admits a trivialization $\{\omega_i\}_{1\leq i\leq r}$. As in Remark \ref{rmk:GMsigma}, we let $M=(\mu_{ij})_{1\leq i,j\leq r} \in \operatorname{Mat}_{r\times r}(\Gamma(S,\Omega^1_{S/\mathbb{C}}))$ be the connection matrix of $\nabla$. Let $z_1,\ldots,z_r: V^{\rm an}\rightarrow \mathbb{C}$ be the holomorphic coordinates dual to $\omega_1,\ldots,\omega_r$, so that $p^*\omega_j=dz_j$ under the canonical isomorphism $p^*g_*\Omega^1_{A^{\natural}/S} \cong \Omega^1_{V/S}$ (cf. Example \ref{ex:linear-d-group-scheme}). Then, $\mathcal{I}_{\nabla}^{\rm an}$ is generated by the 1-forms
		\[
		\sigma_\nabla dz_j=dz_j-\sum_{i=1}^r z_i\mu_{ij}\text{, }\qquad j=1,\ldots,r.
		\]
		The image of $L$ inside of $V^{\rm an}$ is the additive $S^{\rm an}$-subgroup spanned by the integration functionals $\int_\gamma$, for $\gamma$ a section of $(R^1f^{\rm an}_*\mathbb Z)^\vee$. The assertion now follows from
		\[
		d\left( \int_\gamma\omega_j \right)-\sum_{i=1}^r \left(\int_\gamma\omega_i\right)\mu_{ij}=0,
		\]
		which characterizes the Gauss--Manin connection.
	\end{proof}
	
	Let $\mathcal{J} \hookrightarrow \Omega^1_{A^{\natural,\rm an}}$ be the $\mathcal{O}_{A^{\natural,\rm an}}$-submodule obtained from $\mathcal{I}^{\rm an}_{\nabla}$ via its quotient by $L$. As remarked in \cite[6.4]{Bost}, a result of Grothendieck and Mazur--Messing \cite{MazurMessing} implies that the analytic vector bundle $\mathcal{J}$ is the analytification of an algebraic vector bundle. The next theorem gives in particular a new proof of this result.
	
	\begin{theorem} \label{thm:algebraic}
		We have an equality $\mathcal{I}^{\rm an}=\mathcal{J}$ of $\mathcal{O}_{A^{\natural,\rm an}}$-submodules of $\Omega^1_{A^{\natural,\rm an}}$.
	\end{theorem}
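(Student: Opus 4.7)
The plan is to reduce the equality of submodules of $\Omega^1_{A^{\natural,\rm an}}$ to an equality of their pullbacks along the local biholomorphism $\exp$, and then to verify this equality on generators in a conveniently chosen local frame. First, I would work locally on $S^{\rm an}$, shrinking to a simply connected open on which $g_*\Omega^1_{A^\natural/S}$ admits a frame $\{\omega_i\}_{1\le i\le r}$. Since the analytic Gauss--Manin connection is flat and the base is simply connected, I would further change the frame analytically to a \emph{horizontal} one: $\nabla\omega_i = 0$, so that the connection matrix $M = (\mu_{ij})$ vanishes identically. In this flat frame, Example~\ref{ex:linear-d-group-scheme} gives that $\exp^*\mathcal{J} = \mathcal{I}^{\rm an}_\nabla$ is generated by $\sigma_\nabla dz_j = dz_j$, whereas $\mathcal{I}^{\rm an}$ is generated (analytically) by $\sigma\omega_j$, which by Remark~\ref{rmk:GMsigma} are now \emph{closed} absolute $1$-forms on $A^{\natural,\rm an}$. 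It thus suffices to prove $\exp^*\sigma\omega_j = dz_j$ in $\Omega^1_{V^{\rm an}}$.

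For this I would use the holomorphic Poincaré lemma on the simply connected complex manifold $V^{\rm an}$: the closed form $\exp^*\sigma\omega_j$ is exact, say $\exp^*\sigma\omega_j = df_j$ for some $f_j \in \mathcal{O}(V^{\rm an})$. Two normalizations then pin $f_j$ down. First, since $\exp \circ e_V = e_{A^\natural}$ and $\sigma\omega_j$ vanishes along $e_{A^\natural}$ by construction of $\sigma$, the function $f_j \circ e_V$ is locally constant, and I may translate $f_j$ to vanish on the zero section. Second, reducing modulo $p^*\Omega^1_{S^{\rm an}}$, the form $\exp^*\sigma\omega_j$ projects to $\exp^*\omega_j \in \Omega^1_{V^{\rm an}/S^{\rm an}}$; since $\exp$ is an $S^{\rm an}$-group homomorphism whose differential at the identity is the canonical isomorphism $\operatorname{Lie}_S V \cong \operatorname{Lie}_S A^{\natural,\rm an}$ arising from $V = \mathbb V(g_*\Omega^1_{A^\natural/S})$ and $\operatorname{Lie}_S A^\natural \cong H^1_{\rm dR}(A/S)^\vee$, this reduction is exactly $dz_j$. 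Hence $d(f_j - z_j) \in p^*\Omega^1_{S^{\rm an}}$, so $f_j - z_j = p^*h_j$ for some $h_j \in \mathcal{O}(S^{\rm an})$, and the vanishing on the zero section forces $h_j = 0$. This gives $f_j = z_j$ and completes the local verification.

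I expect the main subtlety to be the identification $\exp^*\omega_j = dz_j$ of invariant relative $1$-forms, which requires carefully unwinding the canonical isomorphisms $e^*\Omega^1_{A^\natural/S} \cong g_*\Omega^1_{A^\natural/S} \cong e_V^*\Omega^1_{V/S}$ built into the definition of $V$ and the uniformization~\eqref{eqn:uniformization}. Once that is secured, the two submodules $\exp^*\mathcal{I}^{\rm an}$ and $\mathcal{I}^{\rm an}_\nabla = \exp^*\mathcal{J}$ are generated by the same sections $dz_j$, hence coincide as submodules of $\Omega^1_{V^{\rm an}}$, and the desired equality $\mathcal{I}^{\rm an} = \mathcal{J}$ descends along the surjective local biholomorphism $\exp$.
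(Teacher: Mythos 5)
Your argument is correct, but it takes a genuinely different route from the paper's. The paper stays in an arbitrary algebraic frame $\{\omega_j\}$ and compares the two generating systems directly: since $\sigma\omega_j$ and the descent of $\sigma_\nabla dz_j$ have the same image $\omega_j$ in $\Omega^1_{A^{\natural,\rm an}/S^{\rm an}}$, their difference is a horizontal form $\alpha_j\in\Gamma(A^{\natural,\rm an},(g^{\rm an})^*\Omega^1_{S^{\rm an}})$; substituting into \eqref{eqn:differentialcanonicalsplitting} and using the flatness $dM+M\wedge M=0$ yields $d\alpha_j=\sum_i\mu_{ij}\wedge\alpha_i$, which forces the coefficients of $\alpha_j$ to be independent of the fibre coordinates $z_k$, so that $\alpha_j$ is pulled back from $S^{\rm an}$ and is then killed by applying the retraction $\rho$. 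You instead pass to a $\nabla$-horizontal analytic frame, which turns \eqref{eqn:differentialcanonicalsplitting} into the statement that the $\sigma\omega_j$ are closed, and then identify $\exp^*\sigma\omega_j$ with $dz_j$ via the holomorphic Poincar\'e lemma on $V^{\rm an}$ together with your two normalizations (vanishing on the zero section, and relative part equal to $dz_j$). Both proofs consume the same inputs---Theorem \ref{thm:canonicalsplitting}, integrability of $\nabla$, and the normalization $\exp^*\omega_j=dz_j$ of the uniformization, which the paper also uses implicitly when asserting that both splittings ``project to $\omega_j$''---but yours buys a more geometric picture: in a flat frame the coordinates $z_j$ are exactly the primitives of the forms $\sigma\omega_j$, recovering the classical description of $A^{\natural,\rm an}$ as a quotient of the space of abelian integrals, at the modest price of invoking simple connectivity twice. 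One point you should make explicit: \eqref{eqn:differentialcanonicalsplitting} is stated in Remark \ref{rmk:GMsigma} for a trivialization of the algebraic sheaf $g_*\Omega^1_{A^\natural/S}$, whereas you apply it to an analytic frame $\omega_j'=\sum_k a_{kj}\omega_k$; the required analytic analogue does hold, since $\sigma$ is $\mathcal{O}_S$-linear and the Leibniz rule gives $d\sigma\omega_j'=\sum_i\bigl(da_{ij}+\sum_k\mu_{ik}a_{kj}\bigr)\wedge\sigma\omega_i$, which vanishes precisely when the frame is horizontal. With that spelled out, the descent of the equality of submodules along the surjective local biholomorphism $\exp$ is legitimate and your proof is complete.
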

	
	\begin{proof}
		We keep the notation of the proof of Proposition \ref{prop:descent}. The assertion is local on $S$, so we may assume that $g_*\Omega^1_{A^\natural/S} \cong H^1_{\rm dR}(A/S)$ admits a trivialization $\{\omega_i\}_{1\leq i\leq r}$. Then $\mathcal{J}$ is trivialized by the $L$-invariant sections $\{\sigma_{\nabla}dz_j\}_{1\leq i\leq r}$, whereas $\mathcal{I}$ is trivialized by $\{\sigma\omega_i\}_{1\leq i\leq r}$. Therefore, it is enough to prove that
		\[
		\sigma\omega_j=\sigma_{\nabla}dz_j=dz_j-\sum_{i=1}^rz_i\mu_{ij},
		\]
		as sections of $\Omega^1_{A^{\natural,\rm an}}$, for all $1\leq j\leq r$.
		
		Since $\sigma \omega_j$ and $\sigma_{\nabla}dz_j$ both project to $\omega_j$ in $\Omega^1_{A^{\natural,\rm an}/S^{\rm an}}$, there exists $\alpha_j \in \Gamma(A^{\natural,\rm an},(g^{\rm an})^*\Omega^1_{S^{\rm an}})$ such that
		\begin{equation} \label{eqn:algebraic}
			\sigma \omega_j=dz_j-\sum_{i=1}^rz_i\mu_{ij}+\alpha_j.
		\end{equation}
		Plugging this equation into \eqref{eqn:differentialcanonicalsplitting} and using integrability of the Gauss--Manin connection, namely $dM+M\wedge M=0$, we obtain
		\begin{equation} \label{eqn:algebraic3}
			d\alpha_j=\sum_{i=1}^r\mu_{ij}\wedge\alpha_i.
		\end{equation}
		Now, by working locally on $S^{\rm an}$, we may assume that $\Omega^1_{S^{\rm an}}$ is trivialized by $ds_1,\ldots,ds_m$, for analytic coordinates $s_1,\ldots,s_m: S^{\rm an}\rightarrow \mathbb C$. Writing $\alpha_j=\sum_{i=1}^m\varphi_{ij}ds_i$, equation \eqref{eqn:algebraic3} yields that $\partial \varphi_{ij}/\partial z_k =0$, for all $1\leq k\leq r$. This implies that $\alpha_j$ only depends on the variables $s_i$, hence is the pullback to $A^{\natural,\rm an}$ of a one-form on $S^{\rm an}$. In particular, we have $\rho(\alpha_j)=\alpha_j$, where $\rho$ is the retraction given by Theorem \ref{thm:canonicalsplitting}.(i). On the other hand, by applying $\rho$ to both sides of \eqref{eqn:algebraic} and using that the coordinates $z_i$ all vanish upon restriction to the zero section, we see that $\alpha_j=0$, as we wanted.
	\end{proof}

\end{document}